    \newtheorem{thm}{Theorem}[section]
    \newtheorem{prop}[thm]{Proposition}
    \newtheorem{lem}[thm]{Lemma}
    \newtheorem{cor}[thm]{Corollary}
    \theoremstyle{definition}
    \newtheorem{defn}[thm]{Definition}
    \newtheorem{question}[thm]{Question}
    \theoremstyle{remark}
\newcommand{\executeiffilenewer}[3]{%
\ifnum\pdfstrcmp{\pdffilemoddate{#1}}%
{\pdffilemoddate{#2}}>0%
{\immediate\write18{#3}}\fi%
}
\newcommand{%
\executeiffilenewer{.svg}{.pdf}%
{inkscape -z -D --file=.svg %
--export-pdf=.pdf --export-latex}%
\input{.pdf_tex}%
}[1]{%
\executeiffilenewer{#1.svg}{#1.pdf}%
{inkscape -z -D --file=#1.svg %
--export-pdf=#1.pdf --export-latex}%
\input{#1.pdf_tex}%
}
\newcommand{\fakeenv}{}
\newenvironment{restate}[2]                                    
{ 
 \renewcommand{\fakeenv}{#2}                              
 \theoremstyle{plain} 
 \newtheorem*{\fakeenv}{#1~\ref{#2}}                
 \begin{\fakeenv}
}
{
 \end{\fakeenv}
}
\title{Combinatorics of tight geodesics and stable lengths}
\author{Richard C. H. Webb}
\thanks{This work was supported by the Engineering and Physical Sciences Research Council Doctoral Training Grant.}
\address{Mathematics Institute, University of Warwick, Coventry, CV4 7AL, United Kingdom.}
\email{R.C.H.Webb@warwick.ac.uk}
\begin{document}

\begin{abstract}

We give an algorithm to compute the stable lengths of pseudo-Anosovs on the curve graph, answering a question of Bowditch. We also give a procedure to compute all invariant tight geodesic axes of pseudo-Anosovs.

Along the way we show that there are constants $1<a_1<a_2$ such that the minimal upper bound on `slices' of tight geodesics is bounded below and above by $a_1^{\xi(S)}$ and $a_2^{\xi(S)}$, where $\xi(S)$ is the complexity of the surface. As a consequence, we give the first computable bounds on the asymptotic dimension of curve graphs and mapping class groups.

Our techniques involve a generalization of Masur--Minsky's tight geodesics and a new class of paths on which their tightening procedure works.

\end{abstract}

\maketitle

\section{Introduction}

The curve complex was introduced by Harvey \cite{Harvey}. Its coarse geometric properties was first studied extensively by Masur--Minsky \cite{MasurMinsky99,MasurMinsky00} and it is a widely useful tool in the study of hyperbolic 3--manifolds, mapping class groups and Teichm\"uller theory.

Essential to hierarchies in the curve complex \cite{MasurMinsky00} and the Ending Lamination Theorem \cite{Minsky10,BrockCanaryMinsky12} (see also \cite{BowditchELT}) is the notion of a \textit{tight geodesic}. This has remedied the non-local compactness of the curve complex, as Masur--Minsky proved that between any pair of curves there is at least one but only finitely many tight geodesics.

Bowditch \cite{Bow08}, using hyperbolic 3--manifolds, refined the finiteness further. He also showed that the mapping class group acts \textit{acylindrically} on the curve complex. These theorems have found numerous applications in the study of mapping class groups.

In this paper we shall study tight geodesics in the curve complex from an elementary viewpoint. The techniques may be of independent interest.

\subsection{Slices of tight geodesics}

We refer the reader to Section \ref{sec:background} for the definition of the curve graph/complex, its metric and its geodesics, and Section \ref{sec:tightness} for the definition of a tight geodesic/multigeodesic.

 Let $\mathcal{L}(a,b)$ denote the set of multigeodesics that connect $a,b\in \mathcal{C}^0(S)$. Let $\mathcal{L}_T(a,b)\subset \mathcal{L}(a,b)$ denote the set of tight multigeodesics that connect $a,b\in\mathcal{C}^0(S)$. Let $G(a,b)\subset\mathcal{C}^0(S)$ be the set of curves that belong to some multicurve of some tight multigeodesic connecting $a$ to $b$.

Likewise, for subsets $A,B\subset \mathcal{C}^0(S)$ define $\mathcal{L}(A,B)$ to be the set of multigeodesics $\pi$ for which there exists $a\in A$ and $b\in B$ such that $a$ and $b$ are the endpoints of $\pi$. Define $\mathcal{L}_T(A,B)\subset \mathcal{L}(A,B)$ to be the subset of multigeodesics that are tight. Finally, $G(A,B)\subset\mathcal{C}^0(S)$ is the set of curves $c$ such that $c\in\pi$ for some $\pi\in\mathcal{L}_T(A,B)$.

Write $G(a,b;r)=G(N_r(a),N_r(b))$, and similarly $\mathcal{L}_T(a,b;r)=\mathcal{L}_T(N_r(a),N_r(b))$. The following theorem is \cite[Theorems 1.1 and 1.2]{Bow08} but with effective bounds given. The behaviour of these bounds was  not known since the only proof used geometric limiting arguments.  In the language of the following theorem, we refer to the $| G(a,b) \cap N_{\delta}(c)|$ and $|G(a,b;r)\cap N_{2\delta}(c)|$ as the `slices' of tight geodesics. (Note that curve graphs are uniformly hyperbolic \cite{Aougab13,Bow13,ClayRafiSchleimer13,HenselPrzytyckiWebb13}.)

\begin{restate}{Theorem}{slices}

Fix $\delta\geq 3$ such that $\mathcal{C}^1(S)$ is $\delta$-hyperbolic for all surfaces $S$ with $\xi(S)\geq 2$. Then the following statements hold, where $K$ is a uniform constant.

\begin{enumerate}

\item For any $a,b\in\mathcal{C}^0(S)$, for any curve $c\in\pi\in\mathcal{L}(a,b)$ we have $| G(a,b) \cap N_{\delta}(c)| \leq K^{\xi(S)}$.

\item For any $r\geq 0$, $a,b\in\mathcal{C}^0(S)$ such that $d_S(a,b)\geq 2r+2k+1$ (where $k=10\delta +1$), then for any curve $c\in\pi\in\mathcal{L}(a,b)$ such that $c\notin N_{r+k}(a)\cup N_{r+k}(b)$ we have $|G(a,b;r)\cap N_{2\delta}(c)| \leq K^{\xi(S)}$.

\end{enumerate}

\end{restate}

In Section \ref{lowerbound} we give examples to show that any upper bound in Theorem \ref{slices} has to be at least exponential in complexity. Thus the divergent behaviour of our upper bounds is best possible.

We acknowledge that prior to the work here, Shackleton has given computable bounds on $|G(a,b)|$ in terms of the intersection number of $a$ and $b$ \cite{Shack12}, as well as versions of acylindricity \cite{Shack10} in terms of intersection numbers.

\subsection{Asymptotic dimension}

This was introduced by Gromov \cite{Gromov93}. It is a quasi-isometry invariant. We say $\textnormal{asdim}X\leq n$, if for each $r>0$, $X$ can be covered by sets of uniformly bounded diameter such that each $r$-ball in $X$ intersects at most $n+1$ of these sets.

A consequence of Theorem \ref{slices}, using the work of Bell--Fujiwara \cite{BellFuji08} is that the asymptotic dimension of the curve graph can be bounded from above by an exponential of the complexity of the surface. (The curve graph also satisfies Yu's property A, see also \cite{Kida}.) Following on, Bestvina, Bromberg and Fujiwara \cite{BestBromFuji} quasi-isometrically embedded the mapping class group of a surface $S$ into a finite product of quasi-trees of curve graphs of subsurfaces of $S$, thus the asymptotic dimension of the mapping class group is finite. By Theorem \ref{slices}, it can be bounded from above by a computable function in the complexity of $S$, see \cite[Section 4.2]{BestBromFuji}. These are the first computable bounds, but are almost certainly not best possible. The exponential divergence behaviour of upper bounds on slices of tight geodesics highlights a difficulty in computing the asymptotic dimension of the curve graph. We ask

\begin{question} \label{asquestion}
Does the following hold? \begin{equation}\label{aseq} \textnormal{asdim}\mathcal{MCG}(S)=\textnormal{dim}\mathcal{EL}(S)+1 \end{equation}
\end{question}

Here the ending lamination space $\mathcal{EL}(S)$ can be $\mathcal{MCG}(S)$-equivariantly identified with the Gromov boundary of the curve graph. Gabai \cite[Chapter 15]{Gabai13} has shown that $\textnormal{dim}\mathcal{EL}(S_{0,p})=p-4$, and $\textnormal{dim}\mathcal{EL}(S_{1,p})=p-1$ (see \cite[Remark 15.11]{Gabai13}). Bell--Fujiwara \cite{BellFuji08} have calculated $\textnormal{asdim}\mathcal{MCG}(S)$ for $g\leq 2$, in particular it is $p-3$ for planar surfaces and $p$ for $S_{1,p}$. Thus \ref{aseq} holds for $g\leq 1$. Note $\mathcal{C}(S_{0,6})$ and $\mathcal{C}(S_{2,0})$ are quasi-isometric so the ending lamination spaces are homeomorphic, and (\ref{aseq}) holds again, as Bell-Fujiwara calculated $\textnormal{asdim}\mathcal{MCG}(S_{2,0})=3$.

The dimension of the ending lamination space is bounded above by the dimension of the projective filling laminations minus the dimension of the homology of the curve graph, see \cite[Chapter 15]{Gabai13}. Buyalo--Lebedeva \cite[Proposition 6.5]{BuyaloLebedeva07} have shown for hyperbolic geodesic metric spaces that $\textnormal{asdim}(X)\geq \textnormal{dim}\partial_\infty(X)+1$. For cobounded, proper, hyperbolic geodesic metric spaces the reverse inequality holds \cite{BuyaloLebedeva07}, but the curve graphs are not proper. It would be interesting to know if the equality holds for curve graphs, as this would give much better upper bounds on the asymptotic dimension of the curve graphs and mapping class groups.

A possible negative example to Question \ref{asquestion} is the surface $S_{2,1}$. It is not known if $\textnormal{dim}\mathcal{EL}(S_{2,1})\geq 4$, however, $\textnormal{asdim}\mathcal{MCG}(S_{2,1})=5$.

\subsection{Acylindrical action}

The methods in this paper provide computable constants for acylindricity (see Definition \ref{def:acy} and Theorem \ref{thm:acy}), thus, where appropriate, giving computable versions of the following theorems.

Dahmani, Guirardel and Osin \cite{DGO} have shown that for any surface (except trivial cases) there exists some $N$ such that for any pseudo-Anosov mapping class $f$, the normal closure of $f^N$ is free and purely pseudo-Anosov. Weak proper discontinuity also can be used, but here it obtains an $N$ that a priori depends on $f$. 

Fujiwara \cite{Fuji08} proved there is some $Q$ depending on the surface such that if $f$ and $g$ are pseudo-Anosovs that do not generate a virtually cyclic subgroup then there exists $M$ such that  $f^n$ and $g^m$ or $f^m$ and $g^n$ generate a free group, for all $n\geq Q$ and $m\geq M$.

Also for any surface, there is some $P$ such that for any pseudo-Anosov $f$, either $f^n$ is conjugate to its inverse with $n\leq P$, or there is a homogeneous quasimorphism $h$ on the mapping class group with $h(f)=1$ and the defect of $h$ is bounded only in terms of $S$. See \cite[Theorem C]{CaleFuji10} for more details.

The acylindrical action recovers the weak proper discontinuity condition introduced by Bestvina--Fujiwara \cite{BestFuji02} to study the bounded cohomology of non-virtually abelian subgroups of the mapping class group. They recover that the mapping class group contains no higher rank lattice. See also Hamenst\"adt \cite{Ham08} and Dru\c{t}u--Mozes--Sapir \cite{DruShaSap10} who use different methods (but use some version of acylindricity).

\subsection{Stable lengths and invariant tight geodesics}

The \textit{stable length} (or \textit{asymptotic translation length}) of a mapping class $\phi$ on the curve graph can be defined to be the value \begin{equation*} \liminf_{n\rightarrow\infty} \frac{d_S(c,\phi^n(c))}{n}.\end{equation*} Bowditch showed that stable lengths on curve graphs are uniformly rational for a fixed surface \cite{Bow08}. Here, we bound the uniform denominators of stable lengths of pseudo-Anosovs. These are the first bounds:

\begin{restate}{Theorem}{denominator}

There exists a computable $m=m(\xi(S))$ such that whenever $\phi$ is pseudo-Anosov, then the mapping class $\phi^m$ preserves a geodesic in $\mathcal{C}^1(S)$. Furthermore, $m$ is bounded by $\exp(\exp({K'\xi(S)}))$, where $K'$ is a uniform constant.

\end{restate}

Note that stable lengths of non-pseudo-Anosov mapping classes are zero. Interestingly, Gadre and Tsai \cite{GadreTsai11} have shown that the smallest non-zero stable length on the curve graph for closed surfaces $S_g$ `grows like' $\frac{1}{g^2}$. Valdivia \cite{Val13} has made appropriate statements that include the punctured surfaces, namely if $g=qp$ (where $0<q \in \mathbb{Q}$) then the minimal non-zero stable length behaves like $\chi(S)^{-2}$ and if one fixes $g$, behaves like $\chi(S)^{-1}$.

In the case of $S_{g,1}$, Zhang \cite{Zhang13a} has proved that the stable length is at least one for any point-pushing pseudo-Anosov. In particular, classifying which point-pushing pseudo-Anosovs preserve some geodesic \cite{Zhang13b}.

Lastly we provide some finite time algorithms for computing stable lengths and computing the invariant tight geodesics of any given pseudo-Anosov.

\begin{restate}{Theorem}{stablecalc}

There exists a finite time algorithm that takes as input a surface $S$, a pseudo-Anosov $\phi$ on $S$ and returns the stable length of $\phi$.

\end{restate}

\begin{restate}{Theorem}{tightcalc}

There exists a finite time algorithm that takes as input a surface $S$, a pseudo-Anosov $\phi$ on $S$ and returns all invariant tight geodesics of $\phi$. These are in the form of a collection of finite sets of curves each of whose orbit under $\phi$ is a tight geodesic.

\end{restate}

\subsection*{Acknowledgements} The author would like to thank Brian Bowditch for interesting conversations and insightful comments on early versions of this work.

\section{Background} \label{sec:background}

We write $S=S_{g,p}$ for the surface of genus $g$ with $p$ punctures and define $\xi(S)=3g+p-3$ the \textit{complexity} of $S$.

We write $\mathcal{MCG}(S)=\textnormal{Homeo}(S)/\textnormal{Homeo}_0(S)$ to denote the \textit{mapping class group}. Here, the subgroup $\textnormal{Homeo}_0(S)$ is the set of homeomorphisms homotopic to the identity homeomorphism. It is well-known that a pair of homeomorphisms that are homotopic are also isotopic for a surface $S$ with $\xi(S)\geq 1$, see for example \cite[Theorem 1.12]{FarbMargalit12}.

A \textit{curve} is an isotopy class of simple closed curve on $S$ that is \textit{essential} (does not bound a disc) and \textit{non-peripheral} (does not bound a punctured disc). We shall write $\mathcal{C}^0(S)$ for the set of curves on $S$.

In general, we say that a pair of isotopy classes of some subset on $S$ \textit{miss} if they admit disjoint representatives, and otherwise we say that they \textit{cut}.

We write $\mathcal{C}^1(S)$ for the graph with vertex set $\mathcal{C}^0(S)$ where an unordered pair of vertices $a\neq b$ share an edge if and only if $a$ misses $b$. We call $\mathcal{C}^1(S)$ the \textit{curve graph} of $S$. The \textit{curve complex} $\mathcal{C}(S)$ is the unique flag complex with 1-skeleton $\mathcal{C}^1(S)$. The curve graph is endowed with a metric where each edge has unit length. We write $d_S(a,b)$ for the distance between $a,b\in\mathcal{C}^0(S)$.

A \textit{path} is a sequence $(v_i)$ of curves $v_i\in\mathcal{C}^0(S)$ such that $v_i$ misses $v_{i+1}$ for each $i$. A path is a \textit{geodesic} if for all $i\neq j$, $d_S(v_i,v_j)=|i-j|$.

A \textit{multicurve} is a simplex of $\mathcal{C}(S)$. It is well-known that a multicurve $m$ can be realized on $S$ by a collection of pairwise disjoint and non-isotopic simple closed curves on $S$. If $c$ is a vertex of $m$, or equivalently, if $c$ is a curve of a multicurve $m$, then we write $c\in m$.

For multicurves, we define $d_S(m,m')=\min\{d_S(c,c') : c\in m, c'\in m' \}$, and generally, $d_S(A,B)=\min\{ d_S(a,b) : a\in A, b\in B\}$ for $A,B\subset \mathcal{C}^0(S)$.

Given two multicurves $m$ and $m'$, the \textit{geometric intersection number} $i(m,m')$ is defined to be the quantity $\min | \gamma\cap \gamma'|$ where the minimum is over all $\gamma$ and $\gamma'$ that intersect transversely and represent $m$ and $m'$ respectively. We say that $\gamma$ and $\gamma'$ are in \textit{minimal position} if $|\gamma\cap\gamma'|$ realizes this minimum.

Let $\alpha$ and $\beta$ both be collections of pairwise disjoint simple closed curves on $S$ and suppose that $\alpha$ and $\beta$  intersect transversely. We say that $\alpha$ and $\beta$ share a \textit{bigon} if there is a closed disc $D\subset S$ such that $\partial D$ is a union of an arc of $\alpha$ and an arc of $\beta$.

\begin{lem}
\label{big}
Let $\gamma$ and $\gamma'$ both be collections of pairwise disjoint simple closed curves on $S$, with $\gamma$ and $\gamma'$ intersecting transversely. Then:-

\begin{itemize}

\item $\gamma$ and $\gamma'$ are in minimal position if and only if $\gamma$ and $\gamma'$ do not share a bigon

\item Suppose $\gamma$ and $\gamma'$ are homotopic. Then they are ambient isotopic.

\item Suppose $\gamma$ and $\gamma'$ are disjoint and isotopic, and that no pair of simple closed curves of $\gamma$ are homotopic. Then $\gamma$ and $\gamma'$ cobound annuli, where each annulus corresponds to each simple closed curve of $\gamma$.

\end{itemize}

\end{lem}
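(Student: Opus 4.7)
The plan is to derive all three bullets from the classical single-curve case by reducing to components and invoking the bigon criterion, Epstein's theorem that homotopy implies isotopy for essential simple closed curves, and the standard classification of subsurfaces cobounded by disjoint isotopic curves; I would cite \cite{FarbMargalit12} for the basic facts but sketch the arguments as follows.

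For bullet 1, the forward direction is an easy surgery: if there is a bigon $D$, push the $\gamma$-arc of $\partial D$ across $D$ by an ambient isotopy supported near $D$, removing two intersections, so $\gamma$ and $\gamma'$ are not in minimal position. The converse ``not minimal $\Rightarrow$ bigon'' is the real content. Equip $S$ with a hyperbolic metric and consider the geodesic straightenings of the components of $\gamma$ and $\gamma'$. Lifting to $\tilde S = \mathbb{H}^2$, any two distinct geodesic lines meet in at most one point, so the geodesic representatives are in minimal position. If $|\gamma \cap \gamma'| > i(\gamma,\gamma')$, then matching lifts of $\gamma$ with lifts of its geodesic straightening (which share endpoints at infinity), and doing the same for $\gamma'$, any extra crossing present in the transverse picture but absent from the geodesic picture bounds an innermost disk in $\mathbb{H}^2$ that projects to an embedded bigon in $S$.

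For bullet 2, each component $c$ of $\gamma$ is freely homotopic to a unique component $c'$ of $\gamma'$, since essential non-peripheral simple closed curves on $S$ have distinct free homotopy classes up to orientation. Applying the single-curve version of Epstein's theorem to each such pair gives ambient isotopies, which can be performed one component at a time: I would use bullet 1 to remove bigons between the curve being moved and the curves already placed, so the composite ambient isotopy carries $\gamma$ onto $\gamma'$. For bullet 3, each $c \in \gamma$ is disjoint from and isotopic to a unique $c' \in \gamma'$, and two disjoint isotopic essential simple closed curves on $S$ always cobound an embedded annulus (cutting along $c$, the curve $c'$ becomes isotopic to a boundary component, forcing the containing piece to be an annulus). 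The non-homotopy hypothesis on the components of $\gamma$ makes the pairing injective, giving the claimed annular decomposition. The main technical obstacle is the converse of bullet 1; the rest is bookkeeping on top of the single-curve statements.
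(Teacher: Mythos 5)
Your sketch is correct at the level of detail the paper itself gives, but it is organized differently. The paper disposes of the first bullet by citing \cite[Proposition 1.7]{FarbMargalit12} (whose proof is essentially your geodesic-straightening and innermost-disc argument in $\mathbb{H}^2$), proves the third bullet by lifting to $\mathbb{H}^2$, and then obtains the second bullet as a corollary of the other two: perturb $\gamma$ transverse to $\gamma'$, note that homotopic collections have geometric intersection number zero so the first bullet lets one remove bigons until the collections are disjoint, and then push $\gamma$ onto $\gamma'$ along the annuli furnished by the third bullet. Your route for the second bullet instead goes component by component through the single-curve ``homotopic implies ambient isotopic'' statement, which works but creates the relative-isotopy bookkeeping you allude to: when moving a component onto its target you must keep the previously placed components fixed, so the bigon discs (and, once disjoint, the annulus) between the moving curve and its target have to avoid the placed curves; this holds because an essential curve disjoint from the boundary of such a disc or annulus would otherwise be null-homotopic or parallel to the moving curve, the latter excluded when the components of $\gamma$ are pairwise non-homotopic. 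That non-homotopy assumption appears only in the third bullet, but both your matching of components and the paper's own deduction of the second bullet from the third use it implicitly (it is automatic in the paper's applications, where $\gamma$ represents a multicurve). The paper's ordering buys a shorter argument with no relative isotopy extension; your version makes the reliance on the classical single-curve facts (bigon criterion, Epstein) explicit, and your cut-along-$c$ proof of the third bullet is an equivalent alternative to the paper's lift to $\mathbb{H}^2$.
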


\begin{proof} See for example \cite[Proposition 1.7]{FarbMargalit12} for the first bullet. For the second bullet, one can perturb $\gamma$ by an ambient isotopy so that the curves $\gamma$ and $\gamma'$ intersect transversely, then use the first bullet (if they intersect still) or third bullet (if they are disjoint). For the third bullet, one can lift to $\mathbb{H}^2$, for example.  \end{proof}

The following lemma states that multicurves $m$ and $m'$ that do not have a curve in common have, in a sense, unique minimal position representatives.

\begin{lem}
\label{min}
Suppose $m$ and $m'$ are multicurves that are disjoint as simplices of $\mathcal{C}(S)$. Suppose the pairs $\gamma$ and $\gamma'$ and $\alpha$ and $\alpha'$ both represent $m$ and $m'$ respectively. If $\gamma$ and $\gamma'$ is in minimal position, and likewise $\alpha$ and $\alpha'$, then as subsets of $S$, $\gamma\cup\gamma'$ and $\alpha\cup\alpha'$ are ambient isotopic.
\end{lem}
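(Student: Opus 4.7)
The plan is to reduce to the case $\gamma = \alpha$ and then cut $S$ along this common multicurve, converting the problem into one about arc-and-curve systems on a surface with boundary. Since $\gamma$ and $\alpha$ are isotopic (both represent $m$), the second bullet of Lemma \ref{big} provides an ambient isotopy of $S$ carrying $\alpha$ to $\gamma$. Applying this ambient isotopy globally to $\alpha \cup \alpha'$, we may assume $\gamma = \alpha$ as subsets of $S$; the pair $(\gamma, \alpha')$ is still in minimal position, since minimal position is preserved by ambient isotopy.

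Next, let $\hat{S}$ denote the (possibly disconnected) compact surface with boundary obtained by cutting $S$ open along $\gamma$. Each of $\gamma'$ and $\alpha'$ cuts into a disjoint union of properly embedded arcs (from components of $\gamma'$, resp.\ $\alpha'$, that intersect $\gamma$) and simple closed curves disjoint from $\partial\hat{S}$ (from components disjoint from $\gamma$); denote these systems by $\hat{\gamma}'$ and $\hat{\alpha}'$. The hypothesis that $m$ and $m'$ share no vertex guarantees that none of the interior simple closed curves of $\hat{\gamma}'$ or $\hat{\alpha}'$ are boundary-parallel in $\hat{S}$, and minimality of $\gamma \cup \gamma'$ and $\gamma \cup \alpha'$ in $S$ implies all arcs are essential and that there are no bigons (interior or $\partial$-bigons) in $\hat{S}$.

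The key claim is that $\hat{\gamma}'$ and $\hat{\alpha}'$ represent the same isotopy classes of essential properly embedded arcs-and-curves in $\hat{S}$: the arcs pair the same boundary components and lie in the same free homotopy classes, and the closed components realize the same multicurve. Once this is verified, the analogue of the first bullet of Lemma \ref{big} for arc-and-curve systems on surfaces with boundary, which follows by the same lift-to-$\mathbb{H}^2$ argument as in its third bullet, supplies an ambient isotopy of $\hat{S}$, fixing $\partial \hat{S}$ pointwise, carrying $\hat{\gamma}'$ onto $\hat{\alpha}'$. Regluing along $\gamma$ yields an ambient isotopy of $S$ fixing $\gamma$ setwise and sending $\gamma'$ to $\alpha'$, which combined with the Step~1 isotopy gives the desired ambient isotopy of $\gamma\cup\gamma'$ onto $\alpha\cup\alpha'$.

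The main obstacle is establishing the key matching claim, since a free ambient isotopy of $S$ from $\gamma'$ to $\alpha'$ (as granted by the second bullet of Lemma \ref{big}) a priori need not preserve $\gamma$. I would handle this by equipping $S$ with a complete finite-area hyperbolic metric and passing to the universal cover $\mathbb{H}^2$: the preimages of the minimal-position pairs $(\gamma,\gamma')$ and $(\gamma,\alpha')$ are properly embedded $1$-submanifolds whose endpoints at infinity are determined by the isotopy classes $m$ and $m'$ alone, so the combinatorial pattern of how the lift of $\gamma'$ (resp.\ $\alpha'$) traverses the complementary regions of the lift of $\gamma$ agree. This descent to $\hat{S}$ gives the matching arc classes and closed-curve classes required, completing the argument.
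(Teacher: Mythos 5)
Your overall strategy is genuinely different from the paper's. The paper also begins by normalizing one of the two systems (it arranges $\alpha'=\gamma'$ rather than $\alpha=\gamma$), but then it never leaves the closed surface: it pushes $\gamma$ onto $\alpha$ by removing innermost bigons and finally sliding across the cobounded annuli, observing at each step that $\alpha'$, being bigon-free with both $\gamma$ and $\alpha$, meets each such bigon or annulus only in arcs running from one side to the other, so every ambient isotopy can be chosen to preserve $\alpha'$ setwise. Your route instead cuts along the common multicurve and identifies the two cut arc-and-curve systems via ideal endpoints in $\mathbb{H}^2$; the identification is sound in outline (minimal position gives that a lift of $\gamma'$ crosses a lift of $\gamma$ at most once and crosses it exactly when their endpoints at infinity link, so the cutting pattern, including multiplicities of parallel arcs, depends only on $m$ and $m'$), and it is in effect a direct proof of the well-definedness of the cutting map $\kappa_s$ that the paper later deduces from this lemma. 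So the approach can be made to work and has some independent value.

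However, as written the cut-and-reglue step has a genuine gap. There is no ambient isotopy of $\hat{S}$ \emph{fixing $\partial\hat{S}$ pointwise} that carries $\hat{\gamma}'$ onto $\hat{\alpha}'$: the endpoint sets of the two arc systems on $\partial\hat{S}$ are different finite sets, so boundary points must move. This is not a cosmetic slip, because the regluing is exactly where boundary behaviour must be controlled: an ambient isotopy of $\hat{S}$ descends to one of $S$ preserving $\gamma$ only if, at every time, its restriction to the two boundary copies of each component of $\gamma$ agrees under the gluing. To repair this you should first match the boundary data inside $S$, e.g.\ by an ambient isotopy supported near $\gamma$ and preserving $\gamma$ that slides the points of $\gamma\cap\gamma'$ along $\gamma$ onto $\gamma\cap\alpha'$; this is legitimate, but it needs the statement (which should be extracted explicitly from your universal-cover argument) that the two intersection patterns along each component of $\gamma$ agree, including sides and cyclic order. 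Only then can you ask for an isotopy of $\hat{S}$ relative to $\partial\hat{S}$ and reglue. Two further points need a line each: what you invoke in $\hat{S}$ is not the bigon criterion (first bullet of Lemma \ref{big}) but an ``isotopic systems are ambient isotopic'' statement for systems in which mutually parallel arcs occur with multiplicity (compare the parallelism discussion in the proof of Lemma \ref{B}), so matching isotopy classes alone is not enough — multiplicities must match and parallel families must be aligned simultaneously; and for closed components of $\gamma'$, $\alpha'$ disjoint from $\gamma$, isotopy in $S$ implies isotopy in $S-n(\gamma)$ only because no component of $m'$ is isotopic to a component of $m$ (the disjoint-simplices hypothesis), which is where that hypothesis enters and should be said.
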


\begin{proof}

We provide a sketch. We use Lemma \ref{big} throughout. We may assume after an ambient isotopy that $\alpha'=\gamma'$ and after a small ambient isotopy of $\alpha\cup\alpha'$ preserving $\alpha'$, we make $\gamma$ and $\alpha$ intersect transversely.

If $\alpha$ and $\gamma$ intersect then they are not in minimal position so they share a bigon. Since $\alpha'$ is in minimal position with both $\gamma$ and $\alpha$, there exists an ambient isotopy that removes the bigon and preserves $\alpha'$. This reduces intersection so this process must terminate.  If $\gamma$ and $\alpha$ are disjoint then they cobound annuli, in which case there exists an ambient isotopy of $\gamma$ to $\alpha$ along these annuli, that preserves $\alpha'$ and the lemma is proved.\end{proof}

For a subset $F\subset S$ we write $n(F)$ to denote a regular neighbourhood of $F$ and $N(F)$ for the closure of $n(F)$.

We say that two multicurves $m$ and $m'$ \textit{fill} $S$ if there exist minimal position representatives $\gamma$ and $\gamma'$ of $m$ and $m'$ respectively such that $S-n(\gamma\cup \gamma')$ is a collection of discs and once-punctured discs. Note that if $m$ and $m'$ fill $S$ then any curve of $m$ must cut some curve of $m'$, therefore $m$ and $m'$ are disjoint simplices of $\mathcal{C}(S)$. It is implicit throughout that we think about subsets of $S$ up to their isotopy class so by Lemma \ref{min}, we may simply write $S-n(m\cup m')$. Note that $m$ and $m'$ fill $S$ if and only if for any curve $c$ missing $m$, $c$ must cut $m'$. Hence, if $m$ and $m'$ fill $S$ and $a$ is a multicurve that misses $m$ then $a$ and $m'$ are also disjoint simplices of $\mathcal{C}(S)$.

When $m$ and $m'$ fill $S$, we call a connected component of $S-n(m\cup m')$ a \textit{region} of $m$ and $m'$. Let $D$ be a region, and define $s(D)$ to be the number of $\textit{sides}$ of $D$, this is $2 | \partial D \cap N(m)| = |\partial D\cap N(m) | + | \partial D \cap N(m') |$. A region $D$ of $m$ and $m'$ is \textit{square} if homeomorphic to a disc and $s(D)=4$.

\begin{lem}
\label{fin}

Suppose $m$ and $m'$ fill $S=S_{g,p}$. Then $\textnormal{stab}(m)\cap \textnormal{stab}( m')\subset \mathcal{MCG}(S)$ is a finite subgroup, bounded by $12g+4p\leq 4\xi(S)+12=N_0$.

\end{lem}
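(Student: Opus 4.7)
Plan. My approach is to exploit the cell decomposition of $S$ induced by $m \cup m'$. By Lemma \ref{min}, upon realising $m$ and $m'$ in minimal position, the union $\Gamma := m\cup m'$ is an ambient-isotopy-invariant $4$-valent embedded graph in $S$. The filling hypothesis says $S\setminus\Gamma$ is a disjoint union of discs and once-punctured discs; setting $v = i(m,m')$, one has $e = 2v$ edges and, by Euler's formula, $f = v + 2 - 2g$ regions. Any $\phi\in G_0 := \textnormal{stab}(m)\cap\textnormal{stab}(m')$ is realised (after isotopy) by an orientation-preserving homeomorphism preserving $\Gamma$ setwise and preserving the partition of its edges into those coming from $m$ and those coming from $m'$.

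I would first check that $\Gamma$ is connected: each complementary region has a connected boundary circle contained in a single component of $\Gamma$, so the components of $\Gamma$ together with their adjacent regions partition $S$ into disjoint open sets, which must be a single piece by connectedness of $S$. Next I show that the $G_0$-action on ``flags'' $(x,\tau)$, where $x$ is a vertex of $\Gamma$ and $\tau$ is an edge-germ at $x$, is free: a $\phi$ fixing such a flag is the identity on a neighbourhood of $x$ (since orientation-preservation plus the $4$-valent alternating colour structure at $x$ rules out any non-trivial rotation), and this propagates across $\Gamma$ (along the fixed edge to its far endpoint, where the same argument applies) until $\phi$ fixes all of $\Gamma$ pointwise; finally $\phi$ is isotopic to the identity on $S$ because discs and once-punctured discs both have trivial MCG relative to the boundary. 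This immediately yields finiteness via $|G_0|\leq 8v$.

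To upgrade to the topological bound $|G_0|\leq 12g+4p$, I would consider the coarser $G_0$-action on the set $\mathcal{O}$ of oriented components of $m$ and $m'$; this set has size at most $2(k_1+k_2)$, where $k_1,k_2\leq 3g+p-3 = \xi(S)$ are the numbers of components of the two multicurves. The stabiliser in $G_0$ of an oriented component $c\subset m$ acts on $c\cong S^1$ as an orientation-preserving homeomorphism cyclically permuting the finite set $c\cap m'$, and the rigidity argument above shows that adding the datum of a single intersection point makes this stabiliser trivial. An orbit-stabiliser count on $\mathcal{O}$, combined with this control on the cyclic rotational part of the stabilisers and the elementary bound $k_1 + k_2 \leq 2\xi(S)$, tightens to $|G_0|\leq 4(3g+p) = 12g+4p = 4\xi(S)+12$.

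The main obstacle is precisely this tightening: the naive flag count gives only $|G_0|\leq 8v$, which depends on $i(m,m')$ rather than on the topology of $S$. The bulk of the work is to replace the action on flags (of which there are intersection-many) by an action on topologically finite data --- the oriented components of $m\cup m'$ --- and to carefully account for the cyclic ``rotational'' parts of the stabilisers to land on a linear bound in $g$ and $p$.
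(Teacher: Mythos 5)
Your first half (finiteness) is sound and close in spirit to the paper: realise $m\cup m'$ as a graph $\Gamma$ via Lemma \ref{min}, and show that an element of $\textnormal{stab}(m)\cap\textnormal{stab}(m')$ fixing a flag of $\Gamma$ is trivial in $\mathcal{MCG}(S)$ by propagating across $\Gamma$ and using Alexander's trick on the disc and once-punctured disc regions. (Two small caveats: the paper's $\mathcal{MCG}(S)=\textnormal{Homeo}(S)/\textnormal{Homeo}_0(S)$ includes orientation-reversing classes, so your flags should also record a side of the germ; and the ``action'' on flags is best phrased as an injectivity statement, since a priori you only know each class has \emph{some} representative preserving $\Gamma$.) But this only yields a bound of the form $8\,i(m,m')$, which is exactly what the lemma is not allowed to depend on.

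The genuine gap is the tightening step. Orbit--stabiliser on the set $\mathcal{O}$ of oriented components gives $|G_0|\leq|\mathcal{O}|\cdot|\textnormal{Stab}(c)|$, and the stabiliser of an oriented component $c\subset m$ maps injectively only into the cyclic group of rotations of the cyclically ordered set $c\cap m'$, whose order is $i(c,m')$ --- an intersection number, not a topological quantity. You assert ``control on the cyclic rotational part'', but the only rigidity you have established (triviality once an intersection point on $c$ is also fixed) bounds that rotational part by $i(c,m')$ and hence just reproduces the flag count; nothing in your argument caps it by a function of $g$ and $p$, and such rotations genuinely occur (filling pairs invariant under finite-order homeomorphisms that rotate a component along itself). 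So the claimed conclusion $|G_0|\leq 12g+4p$ does not follow from what you wrote. The paper avoids this by counting with the \emph{complementary regions} rather than the components: an element of the stabiliser is determined by where it sends one chosen non-square region together with one of the boundedly many colour-preserving identifications of that polygon, and the number of non-square regions is bounded topologically by an Euler characteristic count (at most $p$ punctured bigons, and in the worst case at most $2g$ hexagons), which is where the constants $2p$, $4p$ and $12g$, hence $12g+4p=4\xi(S)+12$, come from. Replacing your count over $\mathcal{O}$ by this count over non-square regions is the missing idea; as it stands, your proof proves finiteness but not the stated bound.
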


\begin{proof}

Sketch. Suppose $[g]\in\textnormal{stab}(m)\cap\textnormal{stab}(m')$. Then we may take a representative $f$ of $[g]$, due to Lemma \ref{min}, such that $f$ preserves the subset $\gamma\cup \gamma'\subset S$, where $\gamma$ and $\gamma'$ represent $m$ and $m'$. By considering the regions of $\gamma$ and $\gamma'$, $[f]$ is determined by $f$ restricted to one non-square region, by extension and Alexander's trick.

If there is a punctured region with two sides (a \textit{punctured bigon}) then there are at most $p$ of these, and $f$ is determined by one's restriction, whence bounded by $2p$. If $S=S_{1,p}$ and assuming all punctured regions have four sides we can bound by $4p$ by a similar argument. For $g\geq 2$, with no punctured bigons, remove the punctures, then it suffices to bound for $S_{g,0}$. The worst case scenario is all non-square regions being \textit{hexagons} (six sides, no punctures), and there are at most $2g$ of these, whence $12g$ as a bound. \end{proof}

We say that a space $X$ is $\delta$-hyperbolic if there exists $\delta\geq 0$ such that for any points $a,b,c\in X$, and any geodesics $g_1,g_2,g_3$ connecting $a$ to $b$, $b$ to $c$ and $c$ to $a$ respectively, we have $g_1\subset N_\delta(g_2\cup g_3)$. (Here, $N_d(A)\subset X$ is the set of points that are distance at most $d$ from the set $A$.) This is called the $\delta$-slim triangles condition, and is the most convenient version of hyperbolicity for our purposes.

Note that we have defined geodesics for $\mathcal{C}^0(S)$ so it makes sense to talk about this discrete space being $\delta$-hyperbolic. Note that the curve complex and curve graph are quasi-isometric to $\mathcal{C}^0(S)$, and that hyperbolicity is a quasi-isometry invariant. We have the following crucial theorem.

\begin{thm}[Masur--Minsky \cite{MasurMinsky99}, see also \cite{Bow06},\cite{Ham07}]

The metric space $\mathcal{C}^0(S)$ is $\delta$-hyperbolic.

\end{thm}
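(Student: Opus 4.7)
The plan is to adopt the combinatorial approach via \emph{unicorn paths}, since it yields a uniform $\delta$ independent of $\xi(S)$ and aligns with the elementary viewpoint of the rest of the paper. I would first reduce the problem to the arc graph. Choose a puncture $\ast$ (puncturing an interior point if $S$ is closed) and let $\mathcal{A}(S,\ast)$ be the graph whose vertices are essential arcs with both endpoints at $\ast$ and whose edges record disjointness. The boundary of a regular neighborhood of such an arc together with $\ast$ produces one or two curves, giving a map $\Phi\colon \mathcal{A}(S,\ast)\to\mathcal{C}^0(S)$ that is $2$-Lipschitz and coarsely surjective. Hyperbolicity will then transfer through $\Phi$.

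Next I would introduce unicorn arcs. For distinct $a,b\in\mathcal{A}(S,\ast)$ realized in minimal position, each intersection point $x\in a\cap b$ (and the two endpoints at $\ast$) determines an arc by concatenating a subarc of $a$ from $\ast$ to $x$ with a subarc of $b$ from $x$ to $\ast$; keeping those that are embedded and sorting by the position of $x$ along $a$ produces a finite sequence of arcs $a=c_0,c_1,\ldots,c_n=b$ with consecutive $c_i,c_{i+1}$ disjoint. This sequence is the unicorn path $P(a,b)$.

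The core combinatorial step is the \emph{$1$-slim triangles} lemma: for any three arcs $a,b,d$, every vertex of $P(a,b)$ is at distance at most $1$ in $\mathcal{A}(S,\ast)$ from some vertex of $P(a,d)\cup P(d,b)$. I would prove this by intersection-point case analysis, tracking which truncations of $a,b,d$ at their mutual intersection points remain embedded. From $1$-slim triangles a short induction gives a uniform diameter bound on unicorn triangles, together with the statement that unicorn paths are unparametrized quasigeodesics (this is the delicate part: one compares a unicorn path of length $n$ to any geodesic of length $d(a,b)$, iterating $1$-slimness along dyadic midpoints to extract a logarithmic bound, which is enough for a guessing-geodesics criterion in the sense of Bowditch). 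This yields hyperbolicity of $\mathcal{A}(S,\ast)$ with an explicit uniform constant.

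Finally, I would transfer hyperbolicity to $\mathcal{C}^0(S)$ via $\Phi$. Since $\Phi$ is Lipschitz and its image is coarsely dense, and since preimages in $\mathcal{A}(S,\ast)$ of a curve have uniformly bounded diameter (different arcs that produce the same curve boundary are disjoint from that curve, hence at distance $\leq 2$), the map $\Phi$ is a quasi-isometry onto $\mathcal{C}^0(S)$, so hyperbolicity is preserved. The main obstacle is the $1$-slim lemma together with the quasigeodesic statement for unicorn paths: both reduce to subtle but purely combinatorial facts about embedded subarcs of pairs of arcs in minimal position, and once they are in hand the remaining steps are essentially formal.
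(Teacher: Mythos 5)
The paper does not prove this statement at all: it is quoted as background, cited to Masur--Minsky \cite{MasurMinsky99} (with the uniform version, Theorem \ref{thm:unifhyp}, cited to \cite{Aougab13,Bow13,ClayRafiSchleimer13,HenselPrzytyckiWebb13}). Your sketch is essentially the unicorn-path proof of \cite{HenselPrzytyckiWebb13}, and the middle portion (unicorn paths, $1$-slim triangles, subpath invariance, and a guessing-geodesics criterion giving uniform hyperbolicity of the arc graph) is sound. The genuine gap is in your final transfer step: the map $\Phi\colon\mathcal{A}(S,\ast)\to\mathcal{C}^0(S)$ is \emph{not} a quasi-isometry, so hyperbolicity of the arc graph does not pass to the curve graph the way you claim. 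Concretely, let $W\subsetneq S$ be an essential proper subsurface containing the puncture $\ast$ and let $a$, $b=\psi^n(a)$ be arcs based at $\ast$ contained in $W$, where $\psi$ is a pseudo-Anosov supported on $W$. Every arc of $(S,\ast)$ meets $W$ essentially, so the (coarsely Lipschitz) subsurface projection to $W$ shows $d_{\mathcal{A}(S,\ast)}(a,b)\to\infty$ with $n$; yet $\Phi(a)$ and $\Phi(b)$ lie in $W$, hence are both disjoint from $\partial W$ and satisfy $d_S(\Phi(a),\Phi(b))\leq 2$. This also shows your parenthetical justification fails: two arcs disjoint from a common curve need not be close in the arc graph (a curve is not even a vertex there), and in any case ``Lipschitz, coarsely dense image, bounded fibers'' would not by itself give a quasi-isometry.

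The fix is the one taken in \cite{HenselPrzytyckiWebb13}: either run the unicorn argument in the arc-and-curve graph $\mathcal{AC}(S)$, which \emph{is} uniformly quasi-isometric to $\mathcal{C}(S)$ (Korkmaz--Papadopoulos), or push the unicorn paths into $\mathcal{C}(S)$ and verify Bowditch's guessing-geodesics criterion directly there. Your treatment of closed surfaces needs the same care: the puncture-forgetting map $\mathcal{C}(S_{g,1})\to\mathcal{C}(S_g)$ has point preimages of infinite diameter (point-pushing), so one cannot simply puncture once and for all; in \cite{HenselPrzytyckiWebb13} the auxiliary point is chosen in the complement of the particular pair of curves and the criterion is checked for the resulting family of paths in $\mathcal{C}(S_g)$. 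With those repairs your outline does prove the statement (indeed the stronger uniform version the paper invokes as Theorem \ref{thm:unifhyp}), but as written the quasi-isometry step is false.
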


Recently it has been proved that $\mathcal{C}^0(S)$ is uniformly hyperbolic. See for example, Aougab \cite{Aougab13}, Bowditch \cite{Bow13}, Clay--Rafi--Schleimer \cite{ClayRafiSchleimer13} and Hensel--Przytycki--Webb \cite{HenselPrzytyckiWebb13}.

\begin{thm} \label{thm:unifhyp} There exists $\delta\geq 0$ such that $\mathcal{C}^0(S)$ is $\delta$-hyperbolic when $\xi(S)\geq 2$.

\end{thm}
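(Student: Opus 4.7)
The plan is to reduce to the arc graph of a surface with boundary and there prove hyperbolicity with a uniform constant via unicorn (a.k.a.\ dotted) paths; the curve graph statement then follows by a standard comparison. So first, for a compact surface $\Sigma$ with non-empty boundary, replace each puncture of $S$ by a boundary component and consider the arc graph $\mathcal{A}(\Sigma)$, whose vertices are isotopy classes of essential arcs with endpoints on $\partial\Sigma$, with edges given by disjointness. Given two arcs $a,b$ in minimal position sharing an endpoint on $\partial\Sigma$, define a \emph{unicorn arc} from $a$ to $b$ as the concatenation of the initial subarc of $a$ up to some intersection point $p\in a\cap b$ with the terminal subarc of $b$ from $p$. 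The unicorn arcs, ordered by the position of $p$ along $a$, form a path in $\mathcal{A}(\Sigma)$ whose endpoints are (the isotopy classes of) $a$ and $b$, and which agrees with $a,b$ at its ends.

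The heart of the proof is to show that these unicorn paths define a uniformly slim family of triangles. Concretely, for any three arcs $a,b,c$ pairwise sharing an endpoint and in minimal position, every vertex $v$ on the unicorn path from $a$ to $b$ has distance at most a small absolute constant (one can get $1$) from some vertex on the union of the unicorn paths from $a$ to $c$ and from $c$ to $b$. This follows from a direct surgery argument: the arc $v$ either avoids $c$, in which case $v$ already sits near the path $a\to c$ or $c\to b$, or one can find a subarc of $v$ together with a subarc of $c$ whose union is disjoint from one of the two boundary unicorn arcs, giving an adjacent vertex.

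With uniform slimness of unicorn triangles in hand, one invokes a standard hyperbolicity criterion of the thin-triangle-paths type (Bowditch's or Hamenst\"adt's): if every pair of points is joined by a preferred path in a geodesic metric graph, and every triangle of such paths is $k$-slim, then the graph is $\delta$-hyperbolic with $\delta=\delta(k)$ absolute. Since unicorn paths have length comparable to the distance in $\mathcal{A}(\Sigma)$ (they are in fact quasigeodesics, and short unicorn paths behave well under the surgery), this yields a uniform $\delta_0$ with $\mathcal{A}(\Sigma)$ being $\delta_0$-hyperbolic, independently of $\Sigma$.

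Finally, pass from arcs to curves. For $S=S_{g,p}$ with $p\geq 1$, there is a natural coarsely Lipschitz map $\mathcal{A}(\Sigma)\to\mathcal{C}^0(S)$ sending an arc $\alpha$ to (a component of) $\partial N(\alpha\cup\partial\Sigma)$; a quasi-inverse is obtained by assigning to a curve any arc obtained by cutting it open at a boundary component. Checking that these maps are coarse inverses up to uniformly bounded error gives a uniform quasi-isometry, so $\mathcal{C}^0(S_{g,p})$ inherits uniform hyperbolicity from $\mathcal{A}(\Sigma)$. For closed $S_{g,0}$ (with $\xi\geq 2$ meaning $g\geq 2$), the same result follows by comparing $\mathcal{C}^0(S_{g,0})$ with $\mathcal{C}^0(S_{g,1})$ via the puncture-forgetting map, which is 2-Lipschitz and has fibers of uniformly bounded diameter, so hyperbolicity constants transfer. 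The most delicate step is verifying the uniform slimness of unicorn triangles, since this is what prevents the constant from depending on $\xi(S)$; everything else is bookkeeping.
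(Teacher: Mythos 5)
First, a remark on context: the paper does not prove Theorem \ref{thm:unifhyp} at all; it quotes it from the literature (Aougab, Bowditch, Clay--Rafi--Schleimer, Hensel--Przytycki--Webb \cite{HenselPrzytyckiWebb13}). Your sketch is essentially the unicorn-path proof of Hensel--Przytycki--Webb, and its core -- unicorn paths in an arc graph, $1$-slimness of unicorn triangles by surgery, and a guessing-geodesics criterion giving a universal $\delta$ -- is the right idea and correct in outline. However, the two reduction steps you dismiss as bookkeeping contain genuine gaps.

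Gap (a): the arc graph $\mathcal{A}(\Sigma)$ is \emph{not} uniformly quasi-isometric to $\mathcal{C}^0(S)$, and your two maps are not coarse inverses. The set of arcs mapping near a fixed curve has infinite diameter in $\mathcal{A}(\Sigma)$: take a once-punctured torus $W$ containing the puncture, an arc $\alpha\subset W$, and powers $\phi^n$ of a pseudo-Anosov supported on $W$; every essential arc of $S_{g,1}$ meets $W$ essentially, so the subsurface projection to $W$ shows $d_{\mathcal{A}}(\alpha,\phi^n\alpha)\to\infty$, while the curves $\partial N(\alpha\cup\partial\Sigma)$ and $\partial N(\phi^n\alpha\cup\partial\Sigma)$ stay within bounded distance of $\partial W$ in $\mathcal{C}^0(S)$. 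Hence no Lipschitz section can make the composition coarsely the identity, and the claimed quasi-isometry fails. The correct intermediary is the arc-and-curve graph $\mathcal{AC}(S)$, in which arcs are $1$-dense (so unicorn paths still verify the criterion) and of which $\mathcal{C}^0(S)$ is a uniform Lipschitz coarse retract (Korkmaz--Papadopoulos); alternatively one verifies the criterion in $\mathcal{C}^0(S)$ directly using retracted unicorn paths. Gap (b): for closed surfaces your transfer argument collapses, because the fibers of the puncture-forgetting map $\mathcal{C}^0(S_{g,1})\to\mathcal{C}^0(S_{g,0})$ are orbits of the point-pushing subgroup from the Birman exact sequence and have \emph{infinite} diameter: pushing along a filling loop is pseudo-Anosov with positive stable translation length on $\mathcal{C}^0(S_{g,1})$ (compare the result of Zhang cited in this paper), so ``fibers of uniformly bounded diameter'' is false, and in any case a Lipschitz surjection with bounded fibers would not by itself transfer hyperbolicity downstairs. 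In \cite{HenselPrzytyckiWebb13} the closed case is instead handled by placing the puncture suitably relative to the curves, pushing unicorn paths down to $S_{g,0}$, and verifying the guessing-geodesics criterion there with uniform constants. Without repairing (a) and (b), your argument does not yet establish the theorem, and these are precisely the steps where uniformity could be lost.
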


\section{Tightness}
\label{sec:tightness}

In this section we shall define what it means to be \textit{tight} and for a multipath to be \textit{filling}. Then we generalize Masur--Minsky's tightening procedure \cite{MasurMinsky00} to filling multipaths.

Let $m$ and $m'$ be multicurves that are disjoint simplices in $\mathcal{C}(S)$. Put $m$ and $m'$ in minimal position (well-defined due to Lemma \ref{min}). The \textit{subsurface filled by} $m$ and $m'$, written $F(m,m')$, is the closure of the union of $n(m\cup m')$ with its complementary components homeomorphic to discs or once-punctured discs. Note that $m$ and $m'$ fill $S$ if and only if $F(m,m')=S$.

A \textit{multipath} is a sequence of multicurves $(m_i)$ such that for each $i$, $m_i$ misses $m_{i+1}$. A \textit{multigeodesic} is a multipath where whenever $i\neq j$, $d_S(m_i,m_j)=|i-j|$.

\begin{defn} \label{def:fmp} A multipath $(m_i)$ is \textit{filling} if the sequence has length at least 3 and whenever $|i-j|\geq 3$ we have that $m_i$ and $m_j$ fill $S$.\end{defn}

Note that in a filling multipath, $m_i$ and $m_{i+2}$ are always disjoint simplices of $\mathcal{C}(S)$. This is important, since we will never consider the subsurface filled by $m$ and $m'$ when the multicurves share a curve. It follows from Lemma \ref{min} that $F(m_i,m_j)$ is well-defined whenever $|i-j|\geq 2$.

A filling multipath $(m_i)$ is \textit{tight} at $m_i$ (or simply $i$) if $m_i=\partial F(m_{i-1},m_{i+1})$. It is a subtle point that a priori $\partial F(m_{i-1},m_{i+1})$ had a pair of isotopic curves, so may not be a representative of $m_i$, however if we remove curves from $\partial F(m_{i-1},m_{i+1})$ until they are pairwise non-isotopic then this is a representative of $m_i$, and this is exactly what is meant by the equation above. Given a filling multipath, it makes sense to \textit{tighten} the sequence at $i$ by replacing $m_i$ with $\partial F(m_{i-1},m_{i+1})$.

\begin{lem}
\label{tight}
A tightened filling multipath is a filling multipath.
\end{lem}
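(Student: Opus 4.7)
The plan is to verify that replacing $m_i$ with $m_i' := \partial F(m_{i-1}, m_{i+1})$ preserves the two defining properties of a filling multipath. Write $F = F(m_{i-1}, m_{i+1})$. The length stays at least $3$ trivially, and the only properties requiring check are those involving $m_i'$.

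For the multipath condition, I would use that $F$ is built, after placing $m_{i-1}$ and $m_{i+1}$ in minimal position, as the union of a regular neighborhood of their representatives together with the disc and once-punctured disc complementary components. Hence these representatives lie in the interior of $F$, so they are disjoint from $\partial F = m_i'$. This gives $m_i'$ missing $m_{i-1}$ and $m_{i+1}$.

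For the filling condition, I need to show $m_i'$ and $m_j$ fill $S$ whenever $|i-j|\geq 3$, using the characterization that this is equivalent to every curve $c$ missing $m_i'$ cutting $m_j$. The key preliminary observation is that $m_i$ itself lies in the interior of $S\setminus F$: its curves miss both $m_{i-1}$ and $m_{i+1}$, hence admit representatives in the complement of those of $m_{i-1}\cup m_{i+1}$; being essential and non-peripheral, they avoid the disc and once-punctured disc complementary regions, which are precisely the components absorbed into $F$. Now, any $c$ missing $m_i'$ can be realized disjointly from $\partial F$, putting $c$ either in $F$ or in $\overline{S\setminus F}$. If $c\subset F$, then $c$ and $m_i$ have disjoint representatives, so $c$ misses $m_i$; the hypothesis that $m_i$ and $m_j$ fill $S$ (valid because $|i-j|\geq 3$) then forces $c$ to cut $m_j$. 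If $c\subset \overline{S\setminus F}$, then $c$ is disjoint from $m_{i-1}$ and $m_{i+1}$; since $|i-j|\geq 3$, one of $|(i-1)-j|$ or $|(i+1)-j|$ is at least $4$, so the filling of the corresponding pair from the original hypothesis forces $c$ to cut $m_j$.

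The main obstacle I anticipate is verifying that $m_i'$ is a genuine nonempty multicurve of essential non-peripheral curves, so that the tightened sequence is actually a multipath at all. This should follow because $m_i$ itself witnesses a nontrivial curve in $S\setminus F$, forcing $F\neq S$ and $\partial F \neq \emptyset$; moreover any boundary component of $F$ that were inessential or peripheral would cobound a disc or once-punctured disc complementary region, which by definition is absorbed into $F$. A secondary technical point is to invoke Lemma \ref{min}, which ensures $F$ is well-defined up to ambient isotopy, so that the case distinction "$c\subset F$ versus $c\subset \overline{S\setminus F}$" and the placement of $m_i, m_{i\pm 1}$ relative to $F$ are unambiguous.
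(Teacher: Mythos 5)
Your proof is correct and is essentially the paper's argument in contrapositive form: the paper supposes a curve $c$ misses both $\partial F(m_{i-1},m_{i+1})$ and $m_j$ and derives a contradiction from the same dichotomy ($c$ inside $F$ forces $c$ to miss $m_i$, $c$ outside $F$ forces $c$ to miss $m_{i-1}$ or $m_{i+1}$), using exactly the filling pairs $(m_i,m_j)$ and $(m_{i\mp1},m_j)$ that you use. Your additional checks that $\partial F$ is a nonempty multicurve of essential non-peripheral curves missing $m_{i\pm1}$ are fine and are treated as implicit in the paper.
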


\begin{proof}

Let $(m_i)$ be the original filling multipath under scrutiny, and suppose that we tighten at index $i$. Suppose for a contradiction that there exists a curve $c$ that misses $\partial F(m_{i-1},m_{i+1})$ and $m_j$, with $|i-j|\geq 3$. Without loss of generality, $j\geq i$.

Since $m_j$ and $m_i$ fill $S$, $c$ misses $m_j$ and so $c$ cuts $m_i$. The same argument for $m_{i-1}$ and $m_j$ shows that $c$ cuts $m_{i-1}$. Now, $m_i$ misses $F(m_{i-1},m_{i+1})$ and $m_{i-1}\subset F(m_{i-1},m_{i+1})$.

Our initial assumption that $c$ misses $\partial F(m_{i-1},m_{i+1})$ leads to two cases. Case 1, if $c$ misses $F(m_{i-1},m_{i+1})$ then $c$ misses $m_{i-1}$, a contradiction. Case 2, if $c$ is isotopic into $F(m_{i-1},m_{i+1})$, then $c$ misses $m_i$, a contradiction.\end{proof}

The following lemma and its proof is a generalization of Masur--Minsky \cite[Lemma 4.5]{MasurMinsky00}. In their work, they proved the analogous statement but for geodesics. For clarity and completeness to our generalization, we include the proof here.

\begin{lem}
\label{proc}
Suppose $(m_i)$ is a filling multipath that is tight at $i$. After tightening at $i+1$, the resulting filling multipath is tight at $i$.
\end{lem}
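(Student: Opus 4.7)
The plan is to establish the subsurface equality $F(m_{i-1}, m'_{i+1}) = F(m_{i-1}, m_{i+1})$, where $m'_{i+1} := \partial F(m_i, m_{i+2})$ is the multicurve that replaces $m_{i+1}$ after tightening at $i+1$. Writing $F := F(m_{i-1}, m_{i+1})$, the hypothesised tightness at $i$ says $\partial F = m_i$, so this equality immediately yields $\partial F(m_{i-1}, m'_{i+1}) = m_i$, which is exactly tightness at $i$ in the new multipath; that the new sequence is in fact a filling multipath is Lemma~\ref{tight}.

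The first step is to confine $m'_{i+1}$ to $F$. Each curve $c \in m'_{i+1}$ misses $m_i = \partial F$, so up to isotopy $c$ lies in $F$ or in $S - F$. In the latter case $c$ misses $m_{i-1} \subset F$ and, by definition of $m'_{i+1}$ as the boundary of $F(m_i, m_{i+2})$, misses $m_{i+2}$ as well; but $|(i-1)-(i+2)| = 3$, so the filling multipath property forces $m_{i-1}$ and $m_{i+2}$ to fill $S$, giving a contradiction. Hence $m'_{i+1}$ is isotopic into $F$. Combined with $m_{i-1} \subset F$ and the fact that no disc or once-punctured disc complementary region of $n(m_{i-1} \cup m'_{i+1})$ can cross the essential curves of $\partial F$, this yields $F(m_{i-1}, m'_{i+1}) \subseteq F$.

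The harder inclusion is $F \subseteq F(m_{i-1}, m'_{i+1})$, which I would obtain by showing that $m_{i-1}$ and $m'_{i+1}$ already fill $F$. Suppose for contradiction that $c$ is an essential curve in $F$, not isotopic to any component of $\partial F$, missing both $m_{i-1}$ and $m'_{i+1}$. Since $m_{i-1}$ and $m_{i+1}$ fill $F$ by the definition of $F$, the filling characterization recalled in the paper forces $c$ to cut $m_{i+1}$. I plan to contradict this by proving that $c$ in fact misses $m_{i+1}$. Repeating the previous step, $c$ missing $m'_{i+1}$ places $c$ inside $F(m_i, m_{i+2})$ (the alternative $c \subset S - F(m_i, m_{i+2})$ is ruled out exactly as before, via the filling of $(m_{i-1}, m_{i+2})$). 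Meanwhile each curve of $m_{i+1}$ misses $m_i \cup m_{i+2}$ and is essential and non-peripheral in $S$, so it cannot sit in any disc or once-punctured disc complementary region of $n(m_i \cup m_{i+2})$; therefore it lies on $\partial F(m_i, m_{i+2}) = m'_{i+1}$ or in $S - F(m_i, m_{i+2})$. In either case one obtains representatives disjoint from $c$---on $m'_{i+1}$ because $c$ already misses $m'_{i+1}$, and in $S - F(m_i, m_{i+2})$ because $c$ sits in $F(m_i, m_{i+2})$---so $c$ misses every curve of $m_{i+1}$, and hence misses $m_{i+1}$.

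The main obstacle I foresee is the location bookkeeping in the last step, reconciling the forced intersection ``$c$ cuts $m_{i+1}$'' with the positional observation ``$c$ misses $m_{i+1}$''; a little care is required for boundary cases where a curve is isotopic to a component of $\partial F(m_i, m_{i+2})$ rather than strictly separated from it. Once these are handled, no essential curve of $F$ (other than those isotopic to $\partial F$) misses $m_{i-1} \cup m'_{i+1}$, so $m_{i-1}$ and $m'_{i+1}$ fill $F$; hence $F(m_{i-1}, m'_{i+1}) = F$ and $\partial F(m_{i-1}, m'_{i+1}) = m_i$, as required.
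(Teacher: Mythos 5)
Your proposal is correct and follows essentially the same route as the paper's proof: establishing $F(m_{i-1},m'_{i+1})=F(m_{i-1},m_{i+1})$ via the same two inclusions, using tightness at $i$ and the filling of $m_{i-1},m_{i+2}$ for the first, and the same case analysis on where a putative curve $c$ sits relative to $F(m_i,m_{i+2})$ for the second. The only difference is that you spell out in more detail why a curve isotopic into $F(m_i,m_{i+2})$ must miss $m_{i+1}$, a point the paper leaves implicit.
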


\begin{proof}

Let $m'_{i+1}=\partial F(m_i,m_{i+2})$. We argue that $F(m_{i-1},m_{i+1})=F(m_{i-1},m'_{i+1})$.

Firstly, we argue that $m_{i+1}'\subset F(m_{i-1},m_{i+1})$ to deduce that $F(m_{i-1},m_{i+1}')\subset F(m_{i-1},m_{i+1})$. The multicurve $m_{i+1}'$ misses $\partial F(m_{i-1},m_{i+1})=m_i$. Since $m_{i+1}'$ misses $m_{i+2}$, we have that each curve of $m_{i+1}'$ cuts $m_{i-1}$ and therefore $m_{i+1}'\subset F(m_{i-1},m_{i+1})$ as required.

For the other inclusion we argue by contradiction. Suppose there is a curve $c$ that is essential and non-peripheral in $F(m_{i-1},m_{i+1})$ but misses $F(m_{i-1},m'_{i+1})$. So $c$ misses $m_{i-1}$ and it follows that $c$ cuts $m_{i+1}$. Also, $c$ must cut $m_{i+2}$ because $m_{i-1},m_{i+2}$ fill $S$.

Summarizing, $c$ misses $m'_{i+1}$ so there are two cases. Case 1, $c$ misses $F(m_i,m_{i+2})$, contradicting $c$ cuts $m_{i+2}$. Case 2, $c$ is isotopic into $F(m_i,m_{i+2})$, contradicting $c$ cuts $m_{i+1}$. \end{proof}

Let $P=(m_0,...,m_L)$ be a filling multipath or multigeodesic. We say $P$ is \textit{tight} if for each $i$ with $0<i<L$, $m_i=\partial F(m_{i-1},m_{i+1})$.

Note that any curve on some Bowditch tight geodesic \cite{Bow08} is a curve on some Masur--Minsky tight geodesic---this follows from the \textit{tightening procedure} (this is a combination of Lemmas \ref{tight} and \ref{proc}), starting with the curve under scrutiny. Thus, Theorem \ref{slices} applies to either setting.

\section{Filling multiarcs}

The goal of this section is to provide a bound on the number of tight filling multipaths connecting two vertices, purely in terms of $S$ and the lengths of the multipaths, and not in terms of the endpoints. Here, we make no use of hyperbolicity and the arguments are combinatorial.

Write $\mathcal{MC}^0(S)$ for the set of multicurves of $S$. Analogous to $\mathcal{C}^1(S)$, define a graph $\mathcal{MC}(S)$ with vertex set $\mathcal{MC}^0(S)$, where an unordered pair of vertices are adjacent if the multicurves miss. Make every edge have unit length and we write $d_{\mathcal{MC}(S)}$ for the induced metric. Note that $d_{\mathcal{MC}(S)}(m,m')\geq 3$ if and only if $m,m'$ fill $S$.

An \textit{arc} of a surface with boundary $S'$ is an isotopy class (with endpoints on the boundary throughout the isotopy) of a proper embedding of the compact interval in $S'$, which is not isotopic into the boundary. Analogous to the curve complex, one can define the \textit{arc complex} $\mathcal{A}(S')$. A \textit{multiarc} is a simplex in the complex $\mathcal{A}(S')$.

Given $s\in\mathcal{MC}^0(S)$, write $\mathcal{MA}^0(S,s)$ for the set of multiarcs of $S-n(s)$ with endpoints on $\partial N(s)$. Define $\mathcal{MA}(S,s)$ to be the graph with vertex set $\mathcal{MA}^0(S,s)$ and two non-equal multiarcs share an edge if they miss.

Write $\mathcal{FMA}^0(S,s)$ for the set of multiarcs $m\in\mathcal{MA}^0(S,s)$ such that $(S-n(s))-n(m)$ is a union of discs and once-punctured discs. We remark that $(S-n(s))-n(m)$ is well-defined since multiarcs are homotopic if and only if they are ambient isotopic. Such a multiarc $m$ is called a \textit{filling multiarc of} $(S,s)$.

Define $\mathcal{FMA}(S,s)$ to be the graph with vertex set $\mathcal{FMA}^0(S,s)$, and edges span a pair of vertices $m,m'$ if and only if $m$ misses $m'$.

\begin{lem}
\label{A}
Let $s\in\mathcal{MC}^0(S)$ then $\mathcal{FMA}(S,s)$ has degree bounded by $d_0=2^{24\xi(S)+108}$.
\end{lem}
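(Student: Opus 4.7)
The plan is as follows. Fix a filling multiarc $m\in\mathcal{FMA}^0(S,s)$; we bound the number of $m'\in\mathcal{FMA}^0(S,s)$ missing $m$. Putting $m$ and $m'$ into minimal position, any such $m'$ has a representative disjoint from $m$, and this representative lies in the complement $R := (S-n(s))-n(m)$. By the filling hypothesis, $R$ decomposes as a disjoint union of discs and once-punctured discs, which we call the \emph{regions}. Each arc of $m'$ is confined to a single region, with both endpoints on the $\partial N(s)$-portion of that region's boundary. The overall strategy is to bound the total boundary complexity of the regions and then, for each region, count the multiarcs supported inside it.

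For the first step, a standard Euler-characteristic / ideal-triangulation argument bounds the number of arcs in any multiarc of $S-n(s)$ linearly in $\xi(S)$. Each arc of $m$ contributes two ``type $m$'' sides to region boundaries, and its two endpoints on $\partial N(s)$ subdivide $\partial N(s)$ into the corresponding ``type $s$'' sides; hence the total side count $B := \sum_D s(D)$ is linear in $\xi(S)$. For the per-region count, a disc region $D$ with $s(D)$ sides alternating between types $s$ and $m$ admits essential arcs with endpoints on $\partial N(s)$-sides that are, up to isotopy, determined by the pair of $s$-sides they join, giving at most $\binom{s(D)/2}{2}$ classes and therefore at most $2^{s(D)}$ multiarcs in $D$. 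Once-punctured disc regions contribute only a constant factor more per region (simple essential arcs come in at most two classes per pair of $s$-sides, distinguished by which side of the arc contains the puncture), and there are at most $p \leq \xi(S)+3$ such regions.

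Multiplying the per-region bounds yields $\prod_D 2^{s(D)} \leq 2^{O(B)} = 2^{O(\xi(S))}$, and tracking the constants carefully gives the stated bound $d_0 = 2^{24\xi(S)+108}$. The main obstacle is the combinatorial bookkeeping: one must identify worst-case region shapes (particularly once-punctured discs and regions with many sides on a single $\partial N(s)$-component), count essential arc classes correctly including arcs with both endpoints on the same boundary component of $N(s)$, and ensure the final bound depends only on $\xi(S)$ and not on auxiliary data such as $|s|$ or the intersection number $i(m,m')$. The bound is deliberately generous --- this is why the constants $24$ and $108$ are so large --- but any sharpening of the per-region count would directly improve the overall exponent.
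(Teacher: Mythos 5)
Your overall strategy --- cut along $m$, observe that every neighbour is carried by the disc and once-punctured-disc regions, bound the number of multiarcs in each region exponentially in its number of sides, and bound the total number of sides linearly in $\xi(S)$ --- is exactly the paper's. The gap is in the per-region count. First, an essential arc in a region is \emph{not} determined by the pair of $s$-sides it joins: an arc with both endpoints on the same $s$-side can cut off any consecutive run of the remaining sides (and in a punctured region the puncture can lie on either side of it), so the number of isotopy classes is genuinely quadratic in $s(D)$; you flag this but never resolve it. More seriously, the inference ``at most $\binom{s(D)/2}{2}$ classes, therefore at most $2^{s(D)}$ multiarcs in $D$'' does not follow. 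A multiarc is an arbitrary pairwise-disjoint, pairwise-non-isotopic collection; without exploiting disjointness the number of subsets of classes is $2^{\Theta(s(D)^2)}$, and multiplying over regions then only gives $2^{O(\xi(S)^2)}$ --- too weak for the stated $d_0$ and for its downstream uses (the $K^{\xi(S)}$ bounds of Theorem \ref{slices} need an exponent linear in $\xi(S)$). Even using disjointness, the number of non-crossing arc systems in a region grows like $c^{s(D)}$ with $c>2$ (dissection/Schr\"oder-type counts), so the claimed $2^{s(D)}$ per region is not correct as stated, and ``tracking the constants'' cannot recover it.

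The missing ingredient is the paper's Catalan step: every multiarc in a region is a sub-multiarc of a \emph{maximal} one; in a (possibly once-punctured) region with $s$ sides the maximal multiarcs are counted by a Catalan-type number, at most $\tfrac{s}{2}2^{s-2}\leq 2^{2s-2}$ (for a punctured region one first encloses the puncture in a punctured bigon and triangulates the remaining polygon); and a maximal multiarc has at most $s$ arcs, hence at most $2^{s}$ sub-multiarcs, giving at most $2^{3s}$ multiarcs per region. Combining this with the Euler-characteristic bound $3\sum_D s(D)\leq 24\xi(S)+108$ yields $d_0$. If you replace your per-region step by this (or any equivalent non-crossing count), the rest of your argument matches the paper and goes through.
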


\begin{proof}

Let $m$ be a filling multiarc. For each region $D$ of $m$, we shall bound the number of multiarcs adjacent to $m$ contained in $D$, in terms of $s(D)$. Then we bound the sums of $s(D)$ in terms of $\xi(S)$.

Any multiarc in a punctured region $D$ with $s=s(D)$ sides is a subset of a maximal multiarc, and this envelopes the puncture of $D$ in a punctured bigon, leaving outside a region of $s+2$ sides and a maximal multiarc there. The number of maximal multiarcs of a non-punctured region corresponds to the Catalan numbers $C_n$, where $C_0$=1 and $C_n=2(2n-1)C_{n-1} / (n+1)\leq 4^n$. The number of full triangulations of a convex polygon with $n+2$ sides is equal to $C_n$. Thus the number of maximal multiarcs in any given region (punctured or non-punctured) is at most $\frac{s}{2}2^{s-2}\leq 2^{2s-2}$. Now any multiarc is a subset of a maximal multiarc and therefore the number of multiarcs in a given region $D$ with $s$ sides is at most $2^{2s-2}.2^{s}=2^{3s-2}\leq 2^{3s}$.

The degree of $m$ is bounded by the product of possibilities of multiarc in each region, so it suffices to bound the sum $\Sigma s(D)$, over all regions $D$ of $m$, in terms of $\xi(S)$. Let $N$ be the number of regions of $m$. By gluing along the arcs of $m$, capping off punctures and considering Euler characteristic, $\Sigma s(D)$ is equal to $4N-4\chi (S_g)$. So it suffices to bound $N$. By taking a maximal filling multiarc, the worst case scenario is where all regions are either punctured bigons ($n$ of them) or non-punctured hexagons ($h$ of them). We see that $\chi(S_g)=(n+h)-(\frac{n}{2}+\frac{3h}{2})$ thus $h=n-2\chi(S_g)$ thus $N\leq 2n-2\chi(S_g)$ and so $3\Sigma s(D)\leq 3(8n-12\chi(S_g))\leq 24\xi(S)+108$.\end{proof}

Let $s\in\mathcal{MC}^0(S)$. Write $\mathcal{MC}(S;s,3)=\{ s'\in\mathcal{MC}^0(S) : d_{\mathcal{MC}(S)}(s,s')\geq 3 \}$. Define $\kappa_s: \mathcal{MC}(S;s,3)\rightarrow \mathcal{FMA}(S,s)$ the \textit{cutting map}, where $\kappa_s(s')$ is the simplex of arcs $s'-n(s)\subset S-n(s)$, well-defined due to Lemma \ref{min}.

We write $N_i(A)$ for the set of points at most distance $i$ from the set $A$. Define $\mathcal{MC}(S;s,2)=N_1(\mathcal{MC}(S;s,3))$, here the notion of distance for $N_1$ is in $\mathcal{MC}(S)$. This set has an inclusion into the multicurves of distance at least 2 from $s$, but generally it is not an equality, because one may have distance 2 multicurves (in $\mathcal{MC}(S)$) that share a curve in common. Any curve of $m\in\mathcal{MC}(S;s,2)$ must cut $s$. Notice that $\kappa_s$ extends to a map $\mathcal{MC}(S;s,2)\rightarrow\mathcal{MA}^0(S,s)$, for which we write $\kappa_s$ also, by Lemma \ref{min}.

Define a relation $p_s:\mathcal{FMA}(S,s)\rightarrow \mathcal{MA}(S,s)$ where $m\mapsto m'$ if and only if $m'$ is a sub-multiarc of $m$.

\begin{lem}
\label{1}
Given $s\in\mathcal{MC}^0(S)$ and $s_0,s_1,...,s_l$ a multipath with $s_i\in\mathcal{MC}(S;s,3)$ for each $i$, then $\kappa_s(s_0)\in N_l(\kappa_s(s_l))\subset \mathcal{FMA}(S,s)$ and this set has a computable bound on its cardinality in terms of $l$ and $\xi(S)$.
\end{lem}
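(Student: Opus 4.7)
The plan is to show that the cutting map $\kappa_s$ is coarsely $1$-Lipschitz along such multipaths: if $s_i$ and $s_{i+1}$ are adjacent vertices of $\mathcal{MC}(S;s,3)$, then $\kappa_s(s_i)$ and $\kappa_s(s_{i+1})$ are either equal or adjacent in $\mathcal{FMA}(S,s)$. Granted this, the multipath $s_0,\ldots,s_l$ projects to a walk of length at most $l$ in $\mathcal{FMA}(S,s)$ ending at $\kappa_s(s_l)$, so $\kappa_s(s_0)\in N_l(\kappa_s(s_l))$, and the cardinality is controlled by the degree bound from Lemma \ref{A}.

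For the Lipschitz step, fix $i$. Since $s_i$ misses $s_{i+1}$, pick disjoint representatives $\gamma_i$ and $\gamma_{i+1}$. Since both $s_i$ and $s_{i+1}$ lie in $\mathcal{MC}(S;s,3)$, every curve of $s_i$ and of $s_{i+1}$ cuts $s$ (otherwise such a curve would witness $d_{\mathcal{MC}(S)}(s,s_j)\le 2$). I would put $\gamma_i\cup\gamma_{i+1}$ simultaneously in minimal position with a representative $\gamma$ of $s$ by a small ambient isotopy removing bigons, using Lemma \ref{big}; Lemma \ref{min}, applied separately to each pair $(s,s_j)$ (which share no curves by the distance-$\geq 3$ assumption), ensures that the ambient isotopy class of $\gamma\cup\gamma_j$ is unique, so cutting along $\gamma$ really does recover the multiarc $\kappa_s(s_j)$. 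Since $\gamma_i$ and $\gamma_{i+1}$ remain disjoint throughout, the cut-open images are disjoint multiarcs in $S-n(s)$, and each is a filling multiarc because $s$ and $s_j$ fill $S$ whenever $d_{\mathcal{MC}(S)}(s,s_j)\geq 3$. Hence $\kappa_s(s_i)$ and $\kappa_s(s_{i+1})$ are equal or adjacent in $\mathcal{FMA}(S,s)$.

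Iterating gives $d_{\mathcal{FMA}(S,s)}(\kappa_s(s_0),\kappa_s(s_l))\leq l$, which is exactly the first assertion. For the cardinality bound, Lemma \ref{A} says that $\mathcal{FMA}(S,s)$ has maximum degree at most $d_0=2^{24\xi(S)+108}$, so a standard count gives
\begin{equation*}
|N_l(\kappa_s(s_l))|\;\leq\;\sum_{i=0}^{l} d_0^i\;\leq\;\frac{d_0^{\,l+1}-1}{d_0-1},
\end{equation*}
which is manifestly computable in $l$ and $\xi(S)$. The main technical obstacle is the simultaneous minimal-position argument underlying the Lipschitz step: one has to be sure that cutting all of $s$, $s_i$, $s_{i+1}$ at once does not change the isotopy classes of the resulting multiarcs, and this is exactly where the uniqueness statement in Lemma \ref{min} (combined with the distance-$\geq 3$ hypothesis ruling out shared curves with $s$) is essential.
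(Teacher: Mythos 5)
Your proof is correct and follows the same route as the paper: the whole content is that $\kappa_s(s_i)$ and $\kappa_s(s_{i+1})$ miss (the paper states this is clear, you justify it via simultaneous minimal position and Lemma \ref{min}), after which the cardinality bound is just the degree bound of Lemma \ref{A} applied to the ball of radius $l$, matching the paper's bound $d_0^{l+1}$ up to the exact form of the geometric sum.
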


\begin{proof}

It suffices to show that $\kappa_s(s_i)$ misses $\kappa_s(s_{i+1})$, and this is clear. Then apply Lemma \ref{A} to deduce the cardinality is bounded by $d_0^{l+1}$.\end{proof}

\begin{lem}
\label{2}
Let $s,s',s''\in\mathcal{MC}^0(S)$, suppose $s'$ misses $s''$ and that $s$ and $s''$ fill $S$. Then $\kappa_s(s')\in p_s(N_1(\kappa_s(s'')))$ and the latter set has at most $d_0$ multiarcs.
\end{lem}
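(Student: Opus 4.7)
The plan is to produce a single filling multiarc of $(S,s)$ that witnesses the membership $\kappa_s(s')\in p_s(N_1(\kappa_s(s'')))$, and then to read off the cardinality bound from Lemma \ref{A}.

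First I would check that $\kappa_s(s')$ makes sense as a multiarc. Since $s'$ misses $s''$, every curve $c$ of $s'$ misses $s''$; if such a $c$ also missed $s$, it would miss $s\cup s''$, contradicting the fact (used in Section \ref{sec:background}) that $s$ and $s''$ fill $S$. Hence every curve of $s'$ cuts $s$, and $\kappa_s(s')\in\mathcal{MA}^0(S,s)$.

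Next I form the multicurve $s'\cup s''$, which is well-defined because $s'$ and $s''$ miss (and common curves simply merge). Since $s$ and $s''$ fill $S$, so do $s$ and $s'\cup s''$: any curve avoiding $s\cup(s'\cup s'')$ would in particular avoid $s\cup s''$. Therefore $\kappa_s(s'\cup s'')\in\mathcal{FMA}^0(S,s)$. Using Lemma \ref{min} to identify isotopy classes of minimal-position representatives, the multiarc $\kappa_s(s'\cup s'')$ is realized as the union $\kappa_s(s')\cup\kappa_s(s'')$; in particular $\kappa_s(s'')$ is a sub-multiarc of $\kappa_s(s'\cup s'')$, so the two miss, placing $\kappa_s(s'\cup s'')$ in $N_1(\kappa_s(s''))\subset\mathcal{FMA}(S,s)$. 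Since $\kappa_s(s')$ is also a sub-multiarc of $\kappa_s(s'\cup s'')$, this gives $\kappa_s(s')\in p_s(\kappa_s(s'\cup s''))\subset p_s(N_1(\kappa_s(s'')))$, as required.

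For the cardinality, Lemma \ref{A} bounds the degree of $\kappa_s(s'')$ in $\mathcal{FMA}(S,s)$ by $d_0$, so $N_1(\kappa_s(s''))$ contains at most $d_0$ filling multiarcs (up to the convention on counting the basepoint), and passing through $p_s$ gives the stated bound on the number of multiarcs in the image. The main obstacle I anticipate is the clean identification $\kappa_s(s'\cup s'')=\kappa_s(s')\cup\kappa_s(s'')$: one has to apply Lemma \ref{min} to the pairs $(s,s'\cup s'')$, $(s,s')$, and $(s,s'')$ and check that a single minimal-position representative of $s\cup s'\cup s''$ restricts to minimal position representatives for each of the constituent pairs, so that the arc pattern decomposes cleanly. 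Everything else is a direct unpacking of the definitions of $\kappa_s$, $p_s$, and $\mathcal{FMA}(S,s)$.
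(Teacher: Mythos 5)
Your membership argument is essentially the paper's: the paper simply observes that $\kappa_s(s')\cup\kappa_s(s'')$ is a filling multiarc of $(S,s)$ that misses $\kappa_s(s'')$, so $\kappa_s(s')$ is a sub-multiarc of an element of $N_1(\kappa_s(s''))$; your detour through $s'\cup s''$ and the identification $\kappa_s(s'\cup s'')=\kappa_s(s')\cup\kappa_s(s'')$ (justified via Lemma \ref{min}) is the same construction, and your preliminary check that every curve of $s'$ cuts $s$ is a correct and worthwhile remark.

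The cardinality step, however, has a genuine gap. From $|N_1(\kappa_s(s''))|\leq d_0$ (or $d_0+1$ with the basepoint) you cannot conclude $|p_s(N_1(\kappa_s(s'')))|\leq d_0$ by ``passing through $p_s$'': $p_s$ is a one-to-many relation sending each filling multiarc to \emph{all} of its sub-multiarcs, so the image of a set of at most $d_0$ vertices can a priori have far more than $d_0$ elements (a crude bound along your lines would be $(d_0+1)\cdot 2^{\#\text{arcs}}$, which exceeds $d_0$). The correct argument, and the reason the paper writes ``following the proof of Lemma \ref{A}'' rather than quoting its statement, is to note that every element of $p_s(N_1(\kappa_s(s'')))$ is a multiarc that misses $\kappa_s(s'')$, and then to rerun the region-by-region count from the proof of Lemma \ref{A}: such a multiarc can be isotoped into the regions of $\kappa_s(s'')$, there are at most $2^{3s(D)}$ possibilities in each region $D$, and the product $\prod_D 2^{3s(D)}\leq d_0$ already counts all multiarcs disjoint from $\kappa_s(s'')$, not merely the filling ones. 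With that substitution your proof is complete; as written, the final inference does not follow.
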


\begin{proof}

Note that $\kappa_s(s')\cup \kappa_s(s'')$ is a filling multiarc of $(S,s)$ that misses $\kappa_s(s'')$, hence $\kappa_s(s')$ is a sub-multiarc of $\kappa_s(s')\cup\kappa_s(s'')\in N_1(\kappa_s(s''))$. The number of possibilities, following the proof of Lemma \ref{A}, is at most $d_0$. \end{proof}

Let $s\in\mathcal{MC}^0(S)$ and $m\in\mathcal{MA}(S,s)$. Define $F(s;m)\subset S$ to be the resulting subsurface when taking a regular neighbourhood $n(s\cup m)$ union with complementary discs and once-punctured discs then taking the closure.

Define a relation $T_s:\mathcal{MA}(S,s)\rightarrow \mathcal{MC}^0(S)$ by $m\mapsto \partial F(s;m)$. Here, a filling multiarc would relate to the empty set.

\begin{lem}
\label{B}
Suppose $s\in\mathcal{MC}^0(S)$ and $s'\in\mathcal{MC}(S;s,2)$ then $F(s; \kappa_s(s'))=F(s,s')$.
\end{lem}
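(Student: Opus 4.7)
The plan is to show the equality by unpacking both definitions to the same ambient-isotopy class of subset of $S$, and then observing that the subsequent operations (adding disc and once-punctured disc complementary components, then taking closure) are identical.

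First, I would fix minimal position representatives $\gamma$ and $\gamma'$ of $s$ and $s'$; by Lemma \ref{min} the isotopy class of $\gamma\cup\gamma'$ is well-defined. Since $s'\in\mathcal{MC}(S;s,2)$, the paragraph preceding the lemma notes that every curve of $s'$ cuts $s$, so $\gamma'\cap (S-n(\gamma))$ has no closed components and each of its arcs has endpoints on $\partial N(\gamma)$. Because $\gamma$ and $\gamma'$ are in minimal position, no such arc bounds a bigon with $\partial N(\gamma)$ (a bigon would contradict Lemma \ref{big}), so $\gamma'\cap(S-n(\gamma))$ is an essential multiarc in $(S,s)$, and by construction it represents $\kappa_s(s')$.

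Next I would observe that, up to ambient isotopy rel $\gamma$,
\[
n\bigl(\gamma\cup(\gamma'\cap(S-n(\gamma)))\bigr)=n(\gamma\cup\gamma'),
\]
since the only difference is the thin strip of $n(\gamma')$ that lies inside $n(\gamma)$, which is already contained in $n(\gamma)$. In particular $S\setminus n(\gamma\cup\kappa_s(s'))$ and $S\setminus n(\gamma\cup\gamma')$ have the same collection of complementary components, so the same components are discs, the same components are once-punctured discs, and the remaining components coincide.

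Finally, applying the two definitions: $F(s,s')$ is the closure of the union of $n(\gamma\cup\gamma')$ with its complementary components that are discs or once-punctured discs, while $F(s;\kappa_s(s'))$ is the closure of the union of $n(\gamma\cup\kappa_s(s'))$ with its complementary components that are discs or once-punctured discs. By the previous step these two sets coincide in $S$, proving $F(s;\kappa_s(s'))=F(s,s')$.

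The only mildly delicate point is the verification that $\kappa_s(s')$ is genuinely an essential multiarc of $S-n(s)$, so that $F(s;\kappa_s(s'))$ is defined by the recipe above; this is precisely where the hypothesis $s'\in\mathcal{MC}(S;s,2)$ (ensuring no curve of $s'$ is disjoint from $s$) and minimal position (ensuring no bigons with $\partial N(\gamma)$) are used.
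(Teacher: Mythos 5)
There is a genuine gap, and it sits exactly where the actual content of the lemma lies: in the sentence ``$\gamma'\cap(S-n(\gamma))$ \dots by construction it represents $\kappa_s(s')$'' and the ensuing claim that $S- n(\gamma\cup\kappa_s(s'))$ and $S- n(\gamma\cup\gamma')$ have the same collection of complementary components. By definition $\kappa_s(s')$ is a \emph{multiarc}, i.e.\ a simplex of the arc complex of $S-n(s)$, so a representative consists of pairwise disjoint, pairwise \emph{non-isotopic} arcs, one for each isotopy class occurring among the arcs of $s'-n(s)$. In general $\gamma'\cap(S-n(\gamma))$ contains several parallel (isotopic) copies of the same arc, and then your claim fails: $S-n(\gamma\cup\gamma')$ has an extra square complementary region between each pair of adjacent parallel strands, and the regions adjacent to those squares are different subsets of $S$ from the corresponding regions of $S-n(\gamma\cup\kappa_s(s'))$. (An Euler characteristic count already shows the two regular neighbourhoods are not ambient isotopic, so they certainly do not have identical complements.) Your reduction ``$n(\gamma\cup(\gamma'\cap(S-n(\gamma))))=n(\gamma\cup\gamma')$'' is fine, but the left-hand side is not $n(\gamma\cup\kappa_s(s'))$ unless no two arcs of $\gamma'-n(\gamma)$ are isotopic.

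The lemma is still true, but for the reason your argument skips, and this reason is precisely the paper's proof: disjoint isotopic arcs cobound squares with the boundary of $S-n(\gamma)$, so adding those square regions to $n(\gamma\cup\gamma')$ yields a regular neighbourhood of $\gamma$ together with one arc chosen from each isotopy class, i.e.\ a representative of $\gamma\cup\kappa_s(s')$; since the added regions are discs, they are absorbed when one passes to $F$, and the remaining complementary components, together with the property of being a disc or once-punctured disc, then agree. So what you dismiss as bookkeeping (your ``only mildly delicate point'' is the essentialness of the arcs, which is indeed fine, via minimal position and the fact that every curve of $s'\in\mathcal{MC}(S;s,2)$ cuts $s$) is not the delicate point at all; the multiplicity of parallel arcs is, and without it the proof as written asserts a false intermediate statement. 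Patch it by inserting the square-cobounding step, and your outline becomes essentially the paper's argument.
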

\begin{proof}Firstly, $s$ and $s'$ are disjoint simplices of $\mathcal{C}(S)$. It suffices to show that after adding some square regions to $n(s\cup s')$, one has a regular neighbourhood $n(s\cup\kappa_s(s'))$.

It is a fact that two disjoint embedded arcs are isotopic if and only if they cobound a square with the boundary of the surface. One can see this by doubling the surface then applying a generalization of the last part of Lemma \ref{big}.

Let $\gamma,\gamma'$ be a choice of representatives of $s,s'$ in minimal position. We consider the arcs $\gamma'-n(\gamma)\subset S-n(\gamma)$. Let $A_1,...,A_n$ be the partition of connected components (i.e. the arcs) of $\gamma'-n(\gamma)$ into their isotopy classes in $S-n(\gamma)$, hence an arc in $A_i$ is isotopic to an arc in $A_j$ if and only if $i\neq j$.

Pick an arbitrary choice $\alpha_i$ of arc in $A_i$, for each $i$. Thus, $\bigcup \alpha_i$ is a representative of $\kappa_s(s')$.

Now consider $n(\gamma\cup\gamma')$. Since isotopic arcs cobound squares, as mentioned above, if $A_i$ consists of $n$ arcs then there are $n-1$ squares inbetween the collection of arcs $A_i$. Thus after adding these squares to $n(\gamma\cup\gamma')$, one has a regular neighbourhood $n(\gamma\cup\bigcup\alpha_i)$. Hence we have shown $n(s\cup\kappa_s(s'))$ has the structure mentioned above, as required. \end{proof}

\begin{cor}
\label{3}
Suppose $s_0,s_1,s_2,s_3$ is a filling multipath that is tight at $s_1$. Then $s_1=T_{s_0}(\kappa_{s_0}(s_2))$. \hfill $\square$
\end{cor}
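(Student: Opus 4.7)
The plan is to chase the definitions and apply Lemma \ref{B} as the central step. Unpacking the definition of $T_{s_0}$, what we must show is that $\partial F(s_0; \kappa_{s_0}(s_2)) = s_1$. Since the filling multipath is tight at $s_1$, we already have $s_1 = \partial F(s_0, s_2)$ (using that $s_0$ and $s_2$ are disjoint simplices of $\mathcal{C}(S)$, which holds because $|0-2| = 2$ in a filling multipath, as noted in the remark after Definition \ref{def:fmp}). So it suffices to show $F(s_0; \kappa_{s_0}(s_2)) = F(s_0, s_2)$.

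To apply Lemma \ref{B} to $s = s_0$ and $s' = s_2$, I first need to verify that $s_2 \in \mathcal{MC}(S; s_0, 2)$. Here the filling hypothesis does the work: since $(s_0,s_1,s_2,s_3)$ is a filling multipath, $s_0$ and $s_3$ fill $S$, which means $d_{\mathcal{MC}(S)}(s_0, s_3) \geq 3$, i.e.\ $s_3 \in \mathcal{MC}(S; s_0, 3)$. Because $s_2$ misses $s_3$ along the multipath, this places $s_2 \in N_1(\mathcal{MC}(S; s_0, 3)) = \mathcal{MC}(S; s_0, 2)$, which is precisely the hypothesis of Lemma \ref{B}.

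With that hypothesis in hand, Lemma \ref{B} delivers $F(s_0; \kappa_{s_0}(s_2)) = F(s_0, s_2)$. Taking boundaries and combining with tightness at $s_1$ gives
\[
T_{s_0}(\kappa_{s_0}(s_2)) \;=\; \partial F(s_0; \kappa_{s_0}(s_2)) \;=\; \partial F(s_0,s_2) \;=\; s_1,
\]
which is the claim. There is essentially no obstacle here: the only thing to be careful about is the bookkeeping that lets Lemma \ref{B} apply, namely confirming $s_2 \in \mathcal{MC}(S; s_0, 2)$ from the filling-multipath hypothesis on the 4-term sequence rather than from any direct distance assumption on $s_0$ and $s_2$ themselves.
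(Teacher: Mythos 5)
Your proof is correct and is exactly the argument the paper leaves implicit by stating this as an immediate corollary of Lemma \ref{B}: tightness gives $s_1=\partial F(s_0,s_2)$, and Lemma \ref{B} (applicable since $s_0,s_3$ fill so $s_3\in\mathcal{MC}(S;s_0,3)$ and $s_2$ misses $s_3$, hence $s_2\in\mathcal{MC}(S;s_0,2)$) identifies $F(s_0;\kappa_{s_0}(s_2))$ with $F(s_0,s_2)$. Your care in deriving $s_2\in\mathcal{MC}(S;s_0,2)$ from the fourth multicurve rather than from a distance bound on $s_0,s_2$ themselves is precisely the right bookkeeping.
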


\begin{lem}
\label{I}
Suppose $a=s_0,s_1,...,s_l=b$ is a tight filling multipath. If $i$ is an integer $0\leq i\leq l-3$ then $s_{i+1}\in T_{s_i}(p_{s_i}(N_{l-i-2}(\kappa_{s_i}(b))))$. The cardinality of $T_{s_i}(p_{s_i}(N_{l-i-2}(\kappa_{s_i}(b))))$ as a set of multicurves is bounded by $d_0^{l-i-1}$.
\end{lem}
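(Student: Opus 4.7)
The plan is to identify $s_{i+1}$ with the output of the map $T_{s_i} \circ \kappa_{s_i}$ applied to $s_{i+2}$, and then trace $\kappa_{s_i}(s_{i+2})$ back to $\kappa_{s_i}(b)$ along the tail of the multipath through the lemmas already established. First I would observe that, since $0 \leq i \leq l-3$, the subsequence $s_i, s_{i+1}, s_{i+2}, s_{i+3}$ is itself a filling multipath that is tight at $s_{i+1}$ (by hypothesis on $(s_j)$). Corollary \ref{3} then yields $s_{i+1} = T_{s_i}(\kappa_{s_i}(s_{i+2}))$. Hence it suffices to show $\kappa_{s_i}(s_{i+2}) \in p_{s_i}(N_{l-i-2}(\kappa_{s_i}(b)))$ inside $\mathcal{FMA}(S,s_i)$.

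Next I would pull $\kappa_{s_i}(s_{i+2})$ back one step, via Lemma \ref{2}. Apply that lemma with $s = s_i$, $s' = s_{i+2}$ and $s'' = s_{i+3}$: the hypotheses hold because $s_{i+2}$ misses $s_{i+3}$ in the multipath, and $|(i+3)-i|=3$ guarantees that $s_i$ and $s_{i+3}$ fill $S$ by Definition \ref{def:fmp}. This gives
\[
\kappa_{s_i}(s_{i+2}) \in p_{s_i}(N_1(\kappa_{s_i}(s_{i+3}))).
\]
Now I would propagate from $s_{i+3}$ to $b = s_l$ using Lemma \ref{1}. The multicurves $s_{i+3}, s_{i+4}, \dots, s_l$ form a multipath of length $l - i - 3$, and for each $j \geq i+3$ we have $|j-i| \geq 3$, so $s_j$ and $s_i$ fill $S$, i.e.\ $s_j \in \mathcal{MC}(S; s_i, 3)$. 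Lemma \ref{1} therefore yields $\kappa_{s_i}(s_{i+3}) \in N_{l-i-3}(\kappa_{s_i}(b))$ inside $\mathcal{FMA}(S,s_i)$.

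Combining the last two inclusions gives $\kappa_{s_i}(s_{i+2}) \in p_{s_i}(N_{l-i-2}(\kappa_{s_i}(b)))$, and applying $T_{s_i}$ recovers $s_{i+1} \in T_{s_i}(p_{s_i}(N_{l-i-2}(\kappa_{s_i}(b))))$ as required. For the cardinality bound I would use Lemma \ref{A}: the graph $\mathcal{FMA}(S,s_i)$ has degree at most $d_0$, so $|N_{l-i-2}(\kappa_{s_i}(b))| \leq d_0^{\,l-i-2}$ in the same spirit as the bound in Lemma \ref{1}; the relation $p_{s_i}$ assigns at most $d_0$ sub-multiarcs to each filling multiarc (the enumeration of sub-multiarcs of a single filling multiarc carried out in the proof of Lemma \ref{2}); and $T_{s_i}$ is a function, so it does not increase cardinality. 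Multiplying gives the stated bound $d_0^{\,l-i-1}$.

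The main obstacle is simply bookkeeping of the indices and checking that the filling hypothesis is met at each invocation of the lemmas — in particular, verifying that $s_{i+3}, \dots, s_l$ all lie in $\mathcal{MC}(S; s_i, 3)$, which is precisely where Definition \ref{def:fmp} is used. Once that is in place, Corollary \ref{3} supplies the identification $s_{i+1} = T_{s_i}(\kappa_{s_i}(s_{i+2}))$ and the result assembles from Lemmas \ref{1}, \ref{2} and \ref{A}.
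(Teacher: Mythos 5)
Your proposal is correct and follows essentially the same route as the paper: Lemma \ref{1} applied to the tail $s_{i+3},\dots,s_l$ (noting $s_j$ fills with $s_i$ for $j\geq i+3$), then Lemma \ref{2} with $s=s_i$, $s'=s_{i+2}$, $s''=s_{i+3}$, and Corollary \ref{3} to identify $s_{i+1}=T_{s_i}(\kappa_{s_i}(s_{i+2}))$, with the cardinality bound coming from Lemma \ref{A} exactly as you describe.
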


\begin{proof}

For each $j$ such that $i+3\leq j \leq l$, $\kappa_{s_i}(s_j)\in\mathcal{FMA}^0(S,s_i)$. By Lemma \ref{1}, $\kappa_{s_i}(s_{i+3})\in N_{l-i-3}(\kappa_{s_i}(b))$. By Lemma \ref{2}, $\kappa_{s_i}(s_{i+2})\in p_{s_i}(N_1(\kappa_{s_i}(s_{i+3})))\subset p_{s_i}(N_{l-i-2}(\kappa_{s_i}(b)))$ and Corollary \ref{3} finishes the proof.\end{proof}

Given a subset $M\subset\mathcal{MC}^0(S)$, we define the set $I_j(M,b)=\bigcup \{ T_m(p_m(N_j(\kappa_m(b)))) : m\in M\cap \mathcal{MC}(S;b,3) \}$. One can think of this set consisting of neighbours of $M$ towards $b$. We define $T(M,b)=\bigcup \{ \partial F(m;b) : m\in M \cap \mathcal{MC}(S;b,2) \}$. This set consists of the tightenings of multicurves of $M$ with $b$. We can rephrase Lemma \ref{I} to $s_{i+1}\in I_{l-i-2}(s_i,b)$.

We define $C(a,b;L)\subset \mathcal{C}^0(S)$ to be the set of curves $c$ such that $c$ lies on some tight filling multipath $P$ connecting $a$ to $b$ with $\textnormal{length}(P)\leq L$. Note that when $L\geq d_S(a,b)\geq 3$, the tight geodesics from $a$ to $b$ lie in this set. The following theorem, then, is a generalization of Masur--Minsky: that there are only finitely many tight geodesics between two given curves. Furthermore, this is the first computable bound in terms of the distance of the curves and the surface, improving on Shackleton \cite{Shack12}.

\begin{thm}
\label{thm:tight}

Given $S$ and an integer $L\geq 0$ there exists $B=B(S,L)$ such that given any $a,b\in\mathcal{C}^0(S)$ we have $|C(a,b;L)|\leq B \leq 2\xi(S) L d_0^{L^2}$.

\end{thm}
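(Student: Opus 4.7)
The plan is to bound the number of tight filling multipaths from $a$ to $b$ of length at most $L$, and then multiply by the maximum number of distinct curves on any such multipath.

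First I would fix a length $l$ with $3\leq l\leq L$, noting that filling multipaths of length less than $3$ do not exist by Definition \ref{def:fmp} and so contribute nothing to $C(a,b;L)$. For a tight filling multipath $a=s_0,s_1,\ldots,s_l=b$, Lemma \ref{I} applied successively at each index $i=0,1,\ldots,l-3$ confines $s_{i+1}$ to a set of size at most $d_0^{l-i-1}$, once $s_i$ and the endpoint $b$ are fixed. Starting from the fixed value $s_0=a$ and iterating forward, this determines the tuple $(s_1,\ldots,s_{l-2})$ up to at most
\[
\prod_{i=0}^{l-3} d_0^{l-i-1} \;=\; d_0^{\sum_{k=2}^{l-1} k} \;=\; d_0^{l(l-1)/2 \,-\, 1}
\]
possibilities. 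The remaining entry $s_{l-1}$ is then forced by tightness at index $l-1$: namely $s_{l-1}=\partial F(s_{l-2},s_l)=\partial F(s_{l-2},b)$, which is well defined because $|(l-2)-l|=2$ ensures that $s_{l-2}$ and $s_l$ are disjoint simplices of $\mathcal{C}(S)$ in a filling multipath.

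Next, I sum this bound over $l=3,\ldots,L$ to get at most $L\cdot d_0^{L(L-1)/2}$ tight filling multipaths from $a$ to $b$ of length at most $L$. Each such multipath contains $l+1\leq L+1$ multicurves, and every multicurve has at most $\xi(S)$ curves, so each multipath contributes at most $(L+1)\xi(S)$ distinct curves to $C(a,b;L)$. A union bound over all multipaths gives
\[
|C(a,b;L)| \;\leq\; (L+1)\xi(S)\cdot L\cdot d_0^{L(L-1)/2}.
\]
For $L\geq 2$ and $d_0\geq 2$ one has $L+1\leq 2L$ and the remaining polynomial factor $L$ is absorbed into the exponential using $L\leq d_0^{L-1}$, yielding the stated bound $|C(a,b;L)|\leq 2\xi(S)Ld_0^{L^2}$. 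The trivial edge cases $L<3$ (where $C(a,b;L)=\emptyset$) and $a=b$ are handled by choosing $B=1$.

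The main obstacle is really only the bookkeeping; the combinatorial substance lies entirely in Lemma \ref{I}. The single subtlety deserving care is that Lemma \ref{I} covers only indices $0\leq i\leq l-3$, so the last interior multicurve $s_{l-1}$ cannot be enumerated via the lemma and must instead be read off from the tightness condition. Once this observation is made, the proof is a routine iterated counting argument coupled with a crude absorption of polynomial factors into the dominant exponential $d_0^{L^2}$.
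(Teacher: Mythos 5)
Your argument is correct and follows essentially the same route as the paper: iterate Lemma \ref{I} forward from $a$, note that tightness forces the penultimate multicurve $s_{l-1}=\partial F(s_{l-2},b)$ (the paper encodes this via the sets $T(\cdot,b)$), and multiply by $\xi(S)$ curves per multicurve. The only difference is bookkeeping --- you count whole multipaths of each length $l\leq L$ and take a union bound, while the paper accumulates the possible multicurves into the sets $C_i$ --- and both give the stated bound $2\xi(S)Ld_0^{L^2}$.
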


\begin{proof}

The theorem is vacuous if $a$ and $b$ do not fill $S$. So suppose $a$ and $b$ fill $S$. Define $C_0=\{ a\}$ and $C_i=C_{i-1}\cup T(C_{i-1},b)\cup I_{L-i-1}(C_{i-1},b)$. We take $C=C_{L-1}$. Note that $C_i\subset C_{i+1}$. We claim that $C(a,b;L)\subset C$.

Given any $a=s_0,s_1,...,s_k=b$ a tight filling multipath with $k\leq L$, by Lemma \ref{I} and induction, for all integers $i$ with $0\leq i \leq k-3$ we have $s_{i+1}\in C_{i+1}$. Also, $s_{k-1}=\partial F(s_{k-2},s_k)\in T(C_{k-2},b)\subset C_{k-1}$.

Now we bound $|C|$. We first bound the multicurves from the $I_{L-i-1}(C_{i-1},b)$, forgetting the $T(C_{i-1},b)$ for the moment. By Lemma \ref{I} inductively, this is at most $1+d_0^{L-1}+d_0^{L-1+L-2}+...+d_0^{L-1+L-2+...+2}\leq L d_0^{L^2}$. Now we take care of the $T(C_{i-1},b)$. We note that this is a terminating map, finishing off the tight filling multipath from any of the $I_{L-i-1}(C_{i-1},b)$, thus it is bounded by $2L d_0^{L^2}$. Finally given any multicurve, there are at most $\xi(S)$ components, therefore $B\leq 2\xi(S) L d_0^{L^2}$.\end{proof}

Note that in the case of tight geodesics, a bounded geodesic image theorem holds for $\kappa_s$ for when a geodesic vertex wise fills with $s$ \cite{Webb13}. Therefore some improvements can be made on the above bound $B$ in the case of tight geodesics.

We remark that the proof of Theorem \ref{thm:tight} gives an algorithm to compute the distance of a given pair of curves $a$ and $b$. We can also compute all tight geodesics from $a$ to $b$. These results are not new, see for example Leasure \cite{Leasure} and Shackleton \cite{Shack12}. We hope to return to this subject in a later paper, as the procedure for an algorithm implied in Theorem \ref{thm:tight} may have an interesting running time.

\section{Constructing filling multipaths}
\label{sec:construct}

The goal in this section is to give a procedure for constructing filling multipaths from multigeodesics, and tight filling multipaths from tight multigeodesics.

Some of the work here in this section was inspired by Shackleton \cite{Shack10}. This section makes no use of hyperbolicity and the arguments are elementary.

We say $m_i,m_j$ \textit{fail to fill} if $(m_i)$ is a multipath of length at least 3 and there exists $j-i\geq 3$ such that $m_i,m_j$ do not fill.

We start with an easy lemma.

\begin{lem}
\label{geod}
If $m_0,...,m_n$ is a multipath and $d_S(m_0,m_n)=n$ then $m_0,...,m_n$ is a multigeodesic. \hfill $\square$
\end{lem}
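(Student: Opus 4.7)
The plan is a standard double application of the triangle inequality; no geometry of the surface is really needed, only that $d_S$ is a metric on $\mathcal{MC}^0(S)$ arising from a graph in which missing multicurves are close.

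First I would observe that for each consecutive pair in a multipath, $d_S(m_i, m_{i+1}) \leq 1$. Indeed, by definition $m_i$ misses $m_{i+1}$, so any $c \in m_i$ and any $c' \in m_{i+1}$ admit disjoint representatives, hence share an edge in $\mathcal{C}^1(S)$ (or coincide), and the definition $d_S(m, m') = \min\{d_S(c,c') : c \in m,\, c' \in m'\}$ then yields $d_S(m_i, m_{i+1}) \leq 1$. Iterating the triangle inequality along the sequence gives the upper bound
\[
d_S(m_i, m_j) \leq j - i \qquad \text{for all } 0 \leq i \leq j \leq n.
\]

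For the matching lower bound I would feed the upper bound back into the triangle inequality using the hypothesis $d_S(m_0, m_n) = n$: for any $0 \leq i < j \leq n$,
\[
n = d_S(m_0, m_n) \leq d_S(m_0, m_i) + d_S(m_i, m_j) + d_S(m_j, m_n) \leq i + d_S(m_i, m_j) + (n - j),
\]
so $d_S(m_i, m_j) \geq j - i$. Combined with the earlier upper bound, $d_S(m_i, m_j) = |i-j|$ for all $i \neq j$, which is exactly the definition of a multigeodesic.

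There really isn't a main obstacle; the only mild subtlety is that the definition of $d_S$ on multicurves is the minimum over constituent curves rather than a Hausdorff-type distance, but this is precisely what makes the triangle inequality work cleanly and makes $d_S(m_i, m_{i+1}) \leq 1$ automatic from the multipath condition. This is why the author states the lemma with $\square$.
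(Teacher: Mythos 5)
There is a genuine gap, and it sits exactly at the point you dismiss as a ``mild subtlety''. The quantity $d_S(m,m')=\min\{d_S(c,c'):c\in m,\ c'\in m'\}$ is a minimum over pairs of curves, and such min-distances do \emph{not} satisfy the triangle inequality: the pairs realizing $d_S(m_0,m_i)$, $d_S(m_i,m_j)$ and $d_S(m_j,m_n)$ may involve different curves of $m_i$ and different curves of $m_j$, so the displayed inequality $d_S(m_0,m_n)\leq d_S(m_0,m_i)+d_S(m_i,m_j)+d_S(m_j,m_n)$ is not justified as written, and your closing remark that the min-definition ``is precisely what makes the triangle inequality work cleanly'' is backwards --- it is precisely what threatens it. The best one can say in general is that a multicurve has diameter at most $1$ in $\mathcal{C}^0(S)$ (its curves pairwise miss), which gives the triangle inequality only up to an additive error of $2$; that would only yield $d_S(m_i,m_j)\geq j-i-2$, which does not prove the lemma. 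The same misconception appears in the upper bound (``iterating the triangle inequality'' on the multicurve distance), although there your preceding observation that \emph{every} curve of $m_k$ misses \emph{every} curve of $m_{k+1}$ rescues the conclusion: pick one curve in each $m_k$ and you get an honest path in $\mathcal{C}^1(S)$, so $d_S(m_i,m_j)\leq |i-j|$.

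The missing idea for the lower bound is to route an actual path of curves through the pair realizing $d_S(m_i,m_j)$. Suppose $d_S(m_i,m_j)\leq j-i-1$ for some $i<j$, realized by curves $c\in m_i$ and $c'\in m_j$. Because consecutive multicurves of a multipath miss curve-by-curve, you may choose curves $a_0\in m_0, a_1\in m_1,\dots$ with $a_i=c$, so that $a_0,\dots,a_i$ is a path of curves of length $\leq i$; similarly choose a path of curves from $c'$ to some $b\in m_n$ of length $\leq n-j$. Concatenating with a geodesic from $c$ to $c'$ gives $d_S(a_0,b)\leq i+(j-i-1)+(n-j)=n-1$, hence $d_S(m_0,m_n)\leq n-1$, contradicting the hypothesis $d_S(m_0,m_n)=n$. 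Combined with the upper bound this gives $d_S(m_i,m_j)=|i-j|$ for all $i\neq j$. (The paper offers no proof --- the lemma is stated with a $\square$ --- so there is no authorial argument to compare against; but the freedom to route the path through a prescribed curve of a prescribed $m_k$ is the point your triangle-inequality shortcut skips.)
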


We write $m_n,m_{n+1},...$ to mean the rest of a multipath or multigeodesic. It may or may not terminate.

\begin{lem}
\label{fill}
If $m_0,...,m_n$ and $m_{n-2},m_{n-1},m_n,...$ are multigeodesics and $d_S(m_0,m_j)\geq n$ whenever $j\geq n$ then $m_0,m_1,...$ is a filling multipath.
\end{lem}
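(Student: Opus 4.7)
\medskip

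\noindent\textbf{Proof plan for Lemma \ref{fill}.}

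The plan is to verify the two requirements of Definition \ref{def:fmp}: that consecutive terms miss (automatic, since the concatenation of two multipaths sharing three common terms is again a multipath), and that any two multicurves in the combined sequence at index distance at least $3$ fill $S$. The latter will be proved in the stronger form $d_S(m_i,m_j)\geq 3$, which implies filling: if some curve $c$ missed both $m_i$ and $m_j$, then any $c_i\in m_i$ and $c_j\in m_j$ would be joined by a path $c_i, c, c_j$ of length at most two in the curve graph (shortening to $0$ or $1$ if $c$ coincides with one of them), contradicting $d_S(m_i,m_j)\geq 3$; and such a distance bound also ensures $m_i$ and $m_j$ are disjoint simplices, so the relation $F(m_i,m_j)=S$ is meaningful.

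Fix indices $i<j$ with $j-i\geq 3$, and split into three cases according to where they sit relative to the splice point $n$. First, if $j\leq n$, then $m_i$ and $m_j$ both lie on the multigeodesic $m_0,\ldots,m_n$, so $d_S(m_i,m_j)=j-i\geq 3$. Second, if $i\geq n-2$, then both $m_i$ and $m_j$ lie on the multigeodesic $m_{n-2},m_{n-1},m_n,\ldots$, and again $d_S(m_i,m_j)=j-i\geq 3$. The remaining case is $i\leq n-3$ and $j\geq n+1$; this is where the third hypothesis is needed. Here the triangle inequality gives
\[
n \leq d_S(m_0,m_j) \leq d_S(m_0,m_i)+d_S(m_i,m_j) = i + d_S(m_i,m_j),
\]
using $d_S(m_0,m_i)=i$ from the first multigeodesic, so $d_S(m_i,m_j)\geq n-i\geq 3$.

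The step I expect to require the most care is this third case: we must correctly identify which hypothesis to invoke and keep track of the splice point, and we must use $d_S(m_0,m_i)=i$ (an equality from the first multigeodesic) against $d_S(m_0,m_j)\geq n$ (an inequality for indices past the splice). Once this is set up, the triangle inequality does the work. Finally, to confirm the sequence has length at least $3$, note that the first multigeodesic contributes terms $m_0,\ldots,m_n$ and the second extends past $m_n$, so already $n\geq 2$ suffices to ensure at least three vertices (and the condition on $|i-j|\geq 3$ is vacuous otherwise).
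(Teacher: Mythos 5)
Your proposal is correct and takes essentially the same route as the paper: the paper argues by contradiction that a failing pair must straddle the splice point (forcing $i\leq n-3$, $j\geq n+1$) and then derives $d_S(m_0,m_j)\leq i+2\leq n-1$ against the hypothesis $d_S(m_0,m_j)\geq n$, which is exactly your third case read contrapositively. Your extra bookkeeping (the explicit three-case split and the remark that $d_S(m_i,m_j)\geq 3$ implies filling) only makes explicit what the paper leaves implicit.
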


\begin{proof}
Suppose for contradiction that $m_i,m_j$ fail to fill. Then by multigeodesics, $i\leq n-3$ and $j\geq n+1$ but this implies that $d_S(m_0,m_j) \leq d_S(m_0,m_i)+2 =i+2 \leq n-1$ whereas $j\geq n+1$ and so $d_S(m_0,m_j)\geq n$ a contradiction.\end{proof}

Sometimes we can pinpoint where a multipath fails to fill with the following lemma.

\begin{lem}
\label{place}
Suppose $m_0,...,m_n$ and $m_n,m_{n+1},m_{n+2},m_{n+3},...$ are multigeodesics and $d_S(m_0,m_i)\geq n+1$ whenever $i\geq n+1$. If $m_0,...,m_n,...$ is not filling then only $m_{n-1},m_{n+2}$ or $m_{n-1},m_{n+3}$ can fail to fill.
\end{lem}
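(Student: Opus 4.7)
\medskip

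The plan is to use the fact that within a multigeodesic, any two multicurves $m_i, m_j$ with $|i-j|\geq 3$ automatically fill $S$: otherwise there would be some curve $c$ missing both, giving $d_S(m_i,m_j)\leq d_S(m_i,c)+d_S(c,m_j)\leq 2$, contradicting $d_S(m_i,m_j)=|i-j|\geq 3$. Applying this separately to the two given multigeodesics $m_0,\dots,m_n$ and $m_n,m_{n+1},m_{n+2},\dots$, any pair $m_i,m_j$ with $j-i\geq 3$ that fails to fill must ``straddle'' index $n$; that is, $i\leq n-1$ and $j\geq n+1$ (and in particular the pair cannot lie inside either of the two multigeodesics).

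With this reduction, I need to pin down $i$ and $j$ by two triangle-inequality arguments. First, suppose $m_i,m_j$ fail to fill with $i\leq n-1$ and $j\geq n+1$. Then some curve $c$ misses both, so $d_S(m_i,m_j)\leq 2$. The triangle inequality applied along $m_0$ yields
\[
d_S(m_0,m_j)\leq d_S(m_0,m_i)+d_S(m_i,m_j)\leq i+2,
\]
using that $d_S(m_0,m_i)=i$ in the first multigeodesic. By the hypothesis $d_S(m_0,m_j)\geq n+1$ (since $j\geq n+1$), we get $n+1\leq i+2$, hence $i\geq n-1$, and combined with $i\leq n-1$ we conclude $i=n-1$.

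Second, I bound $j$ from above by playing the symmetric game from $m_n$. Since $d_S(m_{n-1},m_n)=1$ and $d_S(m_{n-1},m_j)\leq 2$, the triangle inequality gives
\[
d_S(m_n,m_j)\leq d_S(m_n,m_{n-1})+d_S(m_{n-1},m_j)\leq 3,
\]
but $d_S(m_n,m_j)=j-n$ along the second multigeodesic, so $j\leq n+3$. Combined with the requirement $j-i\geq 3$, i.e.\ $j\geq n+2$, this forces $j\in\{n+2,n+3\}$, completing the proof.

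The argument is essentially bookkeeping with the triangle inequality; no step is a serious obstacle. The only subtlety is the opening observation that a multigeodesic is automatically ``filling'' on any index gap $\geq 3$, which is what lets me restrict attention to pairs that straddle $n$ and then apply the distance hypothesis at $m_0$ to localize $i$ and the multigeodesic structure past $m_n$ to localize $j$.
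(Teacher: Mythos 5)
Your proof is correct and follows essentially the same route as the paper: the same triangle-inequality argument from $m_0$ (using a curve missing both $m_i$ and $m_j$) forces $i=n-1$, and your bound $d_S(m_n,m_j)\leq 3$ via $m_{n-1}$ and that curve is just the paper's observation that $m_n,m_{n-1},c,m_j$ is a multipath of length $3$, giving $j\leq n+3$ and hence $j\in\{n+2,n+3\}$.
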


\begin{proof}

If $m_i,m_j$ fail to fill then by multigeodesics, $i\leq n-1$ and $j\geq n+1$. Suppose for contradiction that $i\leq n-2$ then $d_S(m_0,m_j)\leq d_S(m_0,m_i)+2=i+2\leq n$ whereas $d_S(m_0,m_j)\geq n+1$. So $i=n-1$. Now, there exists $c$ such that $m_{n-1},c,m_j$ is a multipath but $m_n,m_{n-1},c,m_j$ is a multipath of length 3 hence $j\leq n+3$. So $j=n+2$ or $n+3$ because $j-i\geq 3$.\end{proof}

The following is the most technical in this section, but it is key.

\begin{figure}
\centering
\includegraphics{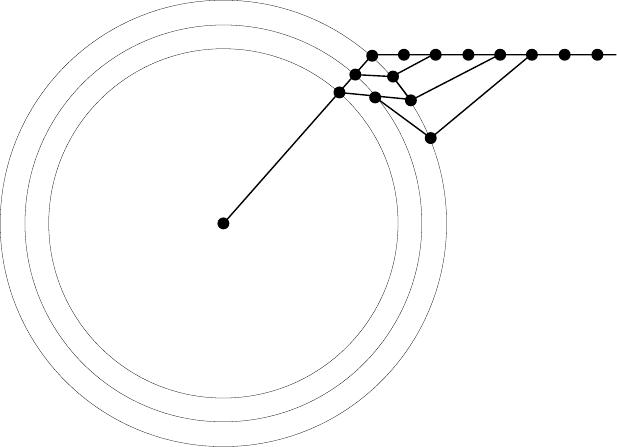}
\caption{The argument for Proposition \ref{con} going through the worst case scenario: $j=r+2,j'=r+4,j''=r+5$.}
\label{fig:conn}
\end{figure}

\begin{prop}
\label{con}
Suppose that $m_0,...,m_r$ and $m_r,m_{r+1},...,m_{r+5},...$ are multigeodesics and that $d_S(m_0,m_i)\geq r+1$ whenever $i\geq r+1$. Then there is a filling multipath $P$ starting at $m_0$, with sub-multipath $m_{r+5},...$, and the length of the multipath from $m_0$ to $m_{r+5}$ is at most $r+5$.
\end{prop}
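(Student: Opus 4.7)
My plan is to build $P$ by starting from the naive concatenation and iteratively repairing it using Lemma \ref{place}. Let
\[
Q_0 = (m_0, m_1, \ldots, m_r, m_{r+1}, m_{r+2}, m_{r+3}, m_{r+4}, m_{r+5}, \ldots),
\]
whose tail from $m_{r+5}$ onwards is exactly the required sub-multipath. If $Q_0$ is already filling, take $P = Q_0$, which has length $r+5$ from $m_0$ to $m_{r+5}$. Otherwise the hypotheses of Lemma \ref{place} with $n = r$ are verbatim our assumptions, so the lemma pins every failing pair to $(m_{r-1}, m_j)$ with $j \in \{r+2, r+3\}$.

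Pick the largest such $j$ and a curve $c_1$ missing both $m_{r-1}$ and $m_j$. Splicing in $c_1$ and deleting $m_r, \ldots, m_{j-1}$ produces
\[
Q_1 = (m_0, \ldots, m_{r-1}, c_1, m_j, m_{j+1}, \ldots, m_{r+5}, \ldots),
\]
whose length from $m_0$ to $m_{r+5}$ has dropped by $j - r - 1 \geq 1$. A short argument sandwiches $d_S(m_0, c_1)$ between $r$ (via $m_{r-1}$) and $d_S(m_0, m_j) - 1 \geq r$ (the standing hypothesis), so $(m_0, \ldots, m_{r-1}, c_1)$ remains a multigeodesic and the distance bound $d_S(m_0, p_i) \geq r+1$ continues to hold for every surviving vertex $p_i$ of $Q_1$ at position $\geq r+1$.

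I would then iterate the splice-and-localise step. At each stage the current candidate can be expressed as two overlapping multigeodesics meeting at the newest pivot vertex, so Lemma \ref{place} reapplies and any surviving failure is pushed strictly to the right. The worst case, depicted in Figure \ref{fig:conn}, consists of three successive iterations at failure indices $j = r+2$, $j' = r+4$, $j'' = r+5$ (in the original numbering). Once $j''$ has reached $r+5$ no further failure is possible, since it would have to straddle the endpoint $m_{r+5}$, so the final candidate is a filling multipath from $m_0$ whose tail is $(m_{r+5}, m_{r+6}, \ldots)$ and whose length up to $m_{r+5}$ is at most $r+5$.

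The main obstacle I anticipate is verifying, after each splice, that the new candidate still admits an expression as two overlapping multigeodesics satisfying the distance hypothesis of Lemma \ref{place}; concretely, at the $k$th iteration the second half $(c_k, m_{j_k}, m_{j_k+1}, m_{j_k+2})$ needs to be a multigeodesic before the lemma reapplies. A genuine shortcut from $c_k$ to a later $m_\ell$ would break this, but would also contradict the maximality of the choice of $j_k$ at the previous step (and of $j_{k-1}$ before it). I would formalise this observation as an invariant preserved through the iterations, which together with the strict length decrease at each step forces termination in at most three rounds, delivering the filling multipath claimed.
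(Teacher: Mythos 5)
Your overall strategy (iteratively repair the concatenation via Lemma \ref{place}, certify new multigeodesics with Lemma \ref{geod}, and terminate after the three worst-case indices of Figure \ref{fig:conn}) is the same skeleton as the paper's proof, but the step you yourself flag as ``the main obstacle'' is a genuine gap, and your proposed fix via maximality does not close it. After the first splice in the case $j=r+2$, the candidate $Q_1=(m_0,\ldots,m_{r-1},c_1,m_{r+2},m_{r+3},\ldots)$ cannot simply be fed back into Lemma \ref{place}: splitting at the pivot $c_1$ (so $n=r$) requires $c_1,m_{r+2},m_{r+3},m_{r+4},\ldots$ to be a multigeodesic, and this may fail because $c_1$ can miss a \emph{single curve} of $m_{r+3}$. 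Maximality of $j$ only says that $m_{r-1}$ and $m_{r+3}$ fill, i.e.\ no curve misses both whole multicurves; a curve of $m_{r+3}$ missing $c_1$ is compatible with that and with every distance constraint (note $d_S(m_0,c_1)=r$), so ``a shortcut would contradict maximality'' conflates missing a multicurve with missing one of its components. Splitting one vertex later, at $m_{r+2}$ (so $n=r+1$), fails for a different reason: Lemma \ref{place} would then need $d_S(m_0,m_i)\geq r+2$ for $i\geq r+3$, while the hypothesis only gives $r+1$. Moreover your invariant ``failures are pushed strictly to the right'' is false in the worst case: in the paper's argument the second failure is located not by Lemma \ref{place} but directly (maximality excludes $m_{r-1}$, and the length-four multipath $m_r,m_{r-1},c_1,c_2,m_{j'}$ forces $j'=r+4$), and the third repair involves the pair $(m_{r-2},c_2)$, which lies to the \emph{left} of the pivot and requires a case split on whether $d_S(m_0,c_2)$ equals $r$ or $r+1$, using Lemma \ref{fill} in one case and Lemma \ref{place} with the smaller parameter $n=r-1$ in the other.

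The endgame is also not an argument as stated: ``no further failure is possible since it would straddle $m_{r+5}$'' ignores possible failures between a newly inserted vertex (or an earlier one) and some $m_\ell$ with $\ell>r+5$. Ruling these out is exactly the content of Lemma \ref{fill}, and invoking it requires checking, in each terminal configuration, that the appropriate initial segment and the appropriate overlapping tail are multigeodesics (via Lemma \ref{geod}, with lower bounds on distances obtained through $m_r$, e.g.\ $d_S(m_r,m_{j+t})=j-r+t$) together with the distance condition for the correct $n$. So while your bookkeeping of the worst case matches the paper, the proof lives precisely in the verifications you deferred, and as written the iteration scheme does not have the hypotheses it needs at the second stage onward.
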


\begin{proof}

The proof is by exhaustion. See Figure \ref{fig:conn}.

Write $P_0=m_0,...,m_r,m_{r+1},...$. By Lemma \ref{place}, with $n=r$, if $P_0$ is not filling then $m_{r-1},m_j$ fails to fill with $j=r+2$ or $r+3$. We take $j$ to be maximal. So there exists $c_1$ with $m_{r-1},c_1,m_j$ a multipath. It follows that $m_0,...,m_{r-1},c_1,m_j$ is a multigeodesic of length $r+1$ by Lemma \ref{geod}. Write $P_1=m_0,...,m_{r-1},c_1,m_j,...$. If $j=r+3$ then $m_r,m_{r-1},c_1,m_j,m_{j+1},...$ is a multigeodesic. Hence by Lemma \ref{fill}, with $n=r+1$, $P_1$ is a filling multipath.

The other case is if $j=r+2$. If $P_1$ is not filling then $c_1,m_{j'}$ fails to fill, $j'\geq r+4$. We took $j$ to be maximal earlier, so it cannot be the case that $m_{r-1}$ fails to fill with some other multicurve in $P_1$.

So there exists $c_2$ with $c_1,c_2,m_{j'}$ a multipath but $m_r,m_{r-1},c_1,c_2,m_{j'}$ is a multipath of length 4 hence $j'\leq r+4$ and so $j'=r+4$. It follows that $m_{r-1},c_1,c_2,m_{r+4},m_{r+5},...$ is a multigeodesic. Write $P_2=m_0,...,m_{r-1},c_1,c_2,m_{r+4},m_{r+5},...$.

If $d_S(m_0,c_2)=r+1$ then by Lemma \ref{geod}, $m_0,...,m_{r-1},c_1,c_2$ is a multigeodesic and by Lemma \ref{fill} with $n=r+1$ $P_2$ is filling.

The other case is if $d_S(m_0,c_2)=r$. By Lemma \ref{place} with $n=r-1$, if $P_2$ is not filling then only $m_{r-2},c_2$ or $m_{r-2},m_{r+4}$ can fail to fill. But $d_S(m_0,m_{r+4})\geq r+1$ and $d_S(m_0,m_{r-2})=r-2$ so $m_{r-2},m_{r+4}$ fill. So there exists $c_3$ with $m_{r-2},c_3,c_2$ a multipath. Write $P_3=m_0,...,m_{r-2},c_3,c_2,m_{r+4},m_{r+5},...$. By Lemma \ref{geod}, $m_0,...,m_{r-2},c_3,c_2,m_{r+4}$ is a multigeodesic of length $r+1$.

If $P_3$ fails to fill then by Lemma \ref{place}, with $n=r$, it must fail to fill with $c_3,m_{j''}$, $j''=r+5$ or $r+6$. So there exists $c_4$ such that $c_3,c_4,m_{j''}$ is a multipath. But $m_r,m_{r-1},m_{r-2},c_3,c_4,m_{j''}$ is a length $5$ multipath so $j''\leq r+5$ and so $j''=r+5$. It follows that $m_{r-2},c_3,c_4,m_{r+5},...$ is a multigeodesic. Also, $m_0,...,m_{r-2},c_3,c_4$ is a multigeodesic of length $r$ by Lemma \ref{geod}. By Lemma \ref{fill}, with $n=r$, $P_4=m_0,...,m_{r-2},c_3,c_2,c_4,m_{r+5},...$ is a filling multipath.\end{proof}

\subsection{Constructing tight filling multipaths}

We now aim to use Proposition \ref{con} and the following two straightforward lemmas to give a recipe for constructing tight filling multipaths (Theorem \ref{dir}). This is crucial for our proofs in Section \ref{sec:slice}.

\begin{lem}
\label{order}

Let $m=(m_i)$ be a multigeodesic, $r\geq 0$ and suppose there are vertices $c_a,c,c_b\in \mathcal{C}^0(S)$ such that $d_S(c',m)\leq r$ whenever $c'\in \{ c_a,c,c_b\}$, $d_S(c_a,c)+d_S(c,c_b)=d_S(c_a,c_b)$ and $d_S(c_a,c)=d_S(c,c_b)=4r+1$. Then we may reorder the indices of $m$ such that whenever $d_S(c_a,m_i),d_S(c,m_j),d_S(c_b,m_k)\leq r$ then $i<j<k$.

\end{lem}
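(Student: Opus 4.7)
The plan is to translate the metric hypothesis into integer-indexing data by using the fact that the multigeodesic satisfies $d_S(m_i,m_j)=|i-j|$. First I would set $I_a=\{i:d_S(c_a,m_i)\leq r\}$, and similarly define $I_c$ and $I_b$. From the triangle inequality together with the hypotheses $d_S(c_a,c)=d_S(c,c_b)=4r+1$ and the additivity $d_S(c_a,c_b)=8r+2$, three basic estimates fall out immediately: each of $I_a,I_c,I_b$ has integer diameter at most $2r$; for $i\in I_a$ and $j\in I_c$ one has $2r+1\leq|i-j|\leq 6r+1$, and symmetrically for $j\in I_c,\ k\in I_b$; and for $i\in I_a,\ k\in I_b$ one has $|i-k|\geq 6r+2$.

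Next I would observe that ``reordering the indices of $m$'' really only amounts to choosing one of the two possible orientations of the multigeodesic. So I would fix representatives $i_{*}\in I_a$ and $k_{*}\in I_b$ and, after reversing the indexing if necessary, assume $i_{*}<k_{*}$. Then for arbitrary $i\in I_a$ and $k\in I_b$, the decomposition $k-i=(k-k_{*})+(k_{*}-i_{*})+(i_{*}-i)$ is bounded below by $-2r+(6r+2)-2r=2r+2>0$, so in fact $i<k$ for every such pair once the orientation is fixed.

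The main step is then the sandwich $i<j<k$ for arbitrary $i\in I_a,\ j\in I_c,\ k\in I_b$. Since $i<k$ has already been established, it suffices to rule out $j\geq k$ and $j\leq i$. If $j\geq k$, then $|i-j|=(k-i)+(j-k)\geq(6r+2)+(2r+1)=8r+3$, contradicting $|i-j|\leq 6r+1$; the case $j\leq i$ is symmetric, using $|j-k|\leq 6r+1$ instead. This forces the desired order on every triple simultaneously, which is the content of the lemma.

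The one point that requires care is noticing that the single global choice of orientation, made from one arbitrary pair $(i_{*},k_{*})$, automatically works for every triple at once; beyond that observation the proof is a short exercise in the triangle inequality and the additivity property of indices along a multigeodesic. I do not expect any genuine obstacle, so the argument should compress into a couple of lines once the sandwich inequality above is recorded.
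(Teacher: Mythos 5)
Your proposal is correct and follows essentially the same route as the paper: the same four triangle-inequality estimates on index differences ($\mathrm{diam}\leq 2r$ for each of $I_a,I_c,I_b$, the bounds $2r+1\leq|i-j|\leq 6r+1$, and $|i-k|\geq 6r+2$), followed by a reverse-the-indexing-if-necessary ordering argument. The only (immaterial) organizational difference is that you fix the global orientation from one pair in $I_a\times I_b$ together with the diameters of $I_a$ and $I_b$ and then sandwich $j$, whereas the paper first forces $j$ into the middle position within each triple and then gets consistency across triples by contradiction with the diameter bound on $I_c$.
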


\begin{proof}

Firstly, $|j-i|=d_S(m_i,m_j) \leq r+ 4r+1 +r =6r+1$, similarly $|k-j| \leq 6r+1$, and $|k-i|\leq 10r+2$. Also, $4r+1 =d_S(c_a,c)\leq r+|j-i|+r$ so $2r+1\leq |j-i|$. Similarly, $2r+1\leq |k-j|$ and $8r+2\leq r+ |k-i| +r$ so $6r+2\leq |k-i|$. It is now clear that $i,j,k$ are distinct.

Suppose $i<k<j$. Then $6r+2\leq k-i \leq j-i \leq 6r+1$, a contradiction. This argument also rules out the cases $j<k<i$, $j<i<k$ and $k<i<j$.

Suppose $i<j<k$ for some $i,j,k$. (Note that if this is not the case, then reorder $m$ and the lemma is proved.) Suppose for contradiction that there exist $i'>j'>k'$, with $d_S(c_a,m_{i'})\leq r$, etc. By $m$ multigeodesic, $|i'-i|\leq 2r$, and as above, $|i'-j|\geq 2r+1$, so $i'<j$ and $j-i'\geq 2r+1$. Similarly, we must have $|j-j'|\leq 2r$, but $j-j' = (j-i') +( i' -j') \geq 2r+1+1$, a contradiction. Thus we must have $i'<j'$ and hence $i'<j'<k'$.\end{proof}

\begin{lem}
\label{where}

Let $m$ be a multigeodesic, $r\geq 3$ and $c_a,c,c_b\in \mathcal{C}^0(S)$ be such that $d_S(c',m)\leq r$ whenever $c'\in \{ c_a,c,c_b\}$, $d_S(c_a,c)+d_S(c,c_b)=d_S(c_a,c_b)$ and $d_S(c_a,c)=4r+1$. Assume further that $m$ is ordered with respect to Lemma \ref{order}. Then $m\cap N_r(c)\subset m_{i+6}\cup ... \cup m_{k-6} \subset \mathcal{C}^0(S)$, where $i$ is largest such that $d_S(m_i,c_a)\leq r$ and $k$ is smallest such that $d_S(m_k,c_b)\leq r$.

\end{lem}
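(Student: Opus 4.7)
The plan is to combine the triangle inequality along the multigeodesic with the index ordering guaranteed by Lemma \ref{order}, which the hypothesis tells us may be invoked. That ordering supplies: whenever $d_S(c_a,m_i), d_S(c,m_j), d_S(c_b,m_k)\le r$, one has $i<j<k$. The whole proof then reduces to showing the three index blocks are far apart.

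Fix the maximal $i$ with $d_S(m_i,c_a)\le r$ and the minimal $k$ with $d_S(m_k,c_b)\le r$, and take an arbitrary index $j$ such that $m_j\cap N_r(c)\ne\emptyset$, so in particular $d_S(m_j,c)\le r$. By the ordering, $i<j<k$. Since $m$ is a multigeodesic, $d_S(m_i,m_j)=j-i$, and the triangle inequality yields
\begin{align*}
4r+1 \;=\; d_S(c_a,c) \;\le\; d_S(c_a,m_i)+d_S(m_i,m_j)+d_S(m_j,c) \;\le\; r+(j-i)+r,
\end{align*}
hence $j-i\ge 2r+1\ge 7$ (using $r\ge 3$), and in particular $j\ge i+6$.

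The symmetric argument, applied using the condition $d_S(c,c_b)=4r+1$ that is implicit in being able to invoke Lemma \ref{order}, gives
\begin{align*}
4r+1 \;=\; d_S(c,c_b) \;\le\; d_S(c,m_j)+d_S(m_j,m_k)+d_S(m_k,c_b) \;\le\; r+(k-j)+r,
\end{align*}
so $k-j\ge 2r+1\ge 7$, and $j\le k-6$. Thus every $j$ for which $m_j$ meets $N_r(c)$ lies in the range $i+6\le j\le k-6$, which is precisely the stated inclusion.

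The only real subtlety, which I expect to be the main (minor) obstacle, is clarifying the asymmetry in the statement: Lemma \ref{where} records $d_S(c_a,c)=4r+1$ but not $d_S(c,c_b)=4r+1$, yet both are needed (and supplied by Lemma \ref{order}, whose ordering is being assumed). Once both equalities are in hand, the bounds $j-i\ge 2r+1$ and $k-j\ge 2r+1$ follow symmetrically from two triangle-inequality applications, and the hypothesis $r\ge 3$ is used only to convert $2r+1\ge 7$ into the concrete offset of $6$ appearing in the conclusion.
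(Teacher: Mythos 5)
Your proof is correct and follows essentially the same route as the paper: invoke the ordering $i<j<k$ from Lemma \ref{order}, then use the triangle inequality along the multigeodesic to get $j-i\geq 2r+1$ and $k-j\geq 2r+1$, which with $r\geq 3$ gives the offset $6$. Your remark about the implicit hypothesis $d_S(c,c_b)=4r+1$ is also consistent with the paper, which supplies it through the assumption that $m$ is ordered with respect to Lemma \ref{order}.
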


\begin{proof}
Suppose $v\in m\cap N_r(c)$ then $v\in m_j$ for some $j$. By Lemma \ref{order}, $i<j<k$. Also, $j-i\geq 2r+1\geq 6$ and similarly $k-j\geq 6$. Thus $j\in \{i+6,...,k-6\}$.\end{proof}

\begin{thm}
\label{dir}

Let $\pi$ be a tight multigeodesic and $r\geq 3$. Then the following statements hold:

\begin{enumerate}

\item If there exists $c_a,c,c_b\in \mathcal{C}^0(S)$ such that $d_S(c',\pi)\leq r$ whenever $c'\in\{ c_a,c,c_b\}$, $d_S(c_a,c)+d_S(c,c_b)=d_S(c_a,c_b)$ and $d_S(c_a,c)=d_S(c,c_b)=4r+1$, then there exists a tight filling multipath $P$ from $c_a$ to $c_b$ such that $\textnormal{length}(P)\leq 12r+2$ and $\pi\cap N_r(c)\subset P\cap N_r(c)$.

\item Suppose $b$ is an endpoint of $\pi$ and that there exist $c_a,c\in \mathcal{C}^0(S)$ such that $d_S(c',\pi)\leq r$ whenever $c'\in\{ c_a,c \}$, $d_S(c_a,c)+d_S(c,b)=d_S(c_a,b)$, $d_S(c_a,c)=4r+1$ and $d_S(c,b)\leq 4r+1$. Then there exists a tight filling multipath $P$ from $c_a$ to $b$ such that $\textnormal{length}(P)\leq 12r+2$ and $\pi\cap N_r(c)\subset P \cap N_r(c)$.

\end{enumerate}

\end{thm}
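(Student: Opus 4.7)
The plan is to use Proposition~\ref{con} to modify a concatenation of short geodesics joining $c_a$ and $c_b$ to $\pi$ with a long sub-multigeodesic of $\pi$ into a filling multipath, and then to tighten using Lemmas~\ref{tight} and~\ref{proc}. For part~(1), first reorder $\pi$ via Lemma~\ref{order}, let $i_a$ be the largest index with $d_S(\pi_{i_a},c_a)\le r$ and $i_b$ the smallest with $d_S(\pi_{i_b},c_b)\le r$, and fix multigeodesics $\alpha$ from $c_a$ to $\pi_{i_a}$ and $\beta$ from $c_b$ to $\pi_{i_b}$ of lengths $l_a,l_b\le r$. I apply Proposition~\ref{con} at the $c_a$-end with $\alpha$ as the first multigeodesic and $\pi_{i_a},\pi_{i_a+1},\ldots$ as the second; the hypothesis $d_S(c_a,\pi_{i_a+k})\ge l_a+1$ for $k\ge 1$ is immediate from the maximality of $i_a$. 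This yields a filling multipath $P_a$ from $c_a$ whose sub-multipath is $\pi_{i_a+5},\pi_{i_a+6},\ldots$, reaching $\pi_{i_a+5}$ in at most $l_a+5$ steps. The symmetric application at the $c_b$-end produces $P_b^\ast$.

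Splice the initial portions: $P$ runs from $c_a$ along $P_a$ to $\pi_{i_a+5}$, then along $\pi$ to $\pi_{i_b-5}$, then in reverse along $P_b^\ast$ back to $c_b$. The total length is at most $(l_a+5)+(i_b-i_a-10)+(l_b+5)\le 12r+2$, using $i_b-i_a\le 10r+2$. The filling condition for $P$ splits into three regimes: pairs whose indices both lie in the $P_a$-portion (its initial portion plus the middle $\pi$-segment) reduce to pairs in $P_a$, which is filling, and similarly for the $P_b^\ast$-side; the only non-trivial case is a pair straddling both modified ends, where the two multicurves lie within $l_a+5$ and $l_b+5$ of $c_a$ and $c_b$ respectively, so are at distance at least $8r+2-2(r+5)=6r-8\ge 3$ for $r\ge 3$, which in the curve graph is equivalent to filling $S$.

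To achieve tightness, iterate Lemmas~\ref{tight} and~\ref{proc} from one end of $P$. The key observation is that the internal middle indices, corresponding to $\pi_{i_a+6},\ldots,\pi_{i_b-6}$, have exactly the same pair of neighbours in $P$ as in $\pi$, hence are already tight because $\pi$ is. A tightening step modifies only the multicurve at the index being tightened, and by Lemma~\ref{proc} together with its mirror image (obtained by reversing the multipath), tightness at an already-tight index survives when a neighbour is tightened. Hence after the full procedure, $P$ is tight and still has the multicurves $\pi_{i_a+6},\ldots,\pi_{i_b-6}$ at their respective indices, which by Lemma~\ref{where} contain every curve of $\pi\cap N_r(c)$.

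Part~(2) is handled by the same recipe with only one application of Proposition~\ref{con}, at the $c_a$-end, since $b=\pi_N$ is already an endpoint of $\pi$. The monotonic ordering that Lemma~\ref{order} provides in part~(1) is replaced here by a direct computation from $d_S(c_a,b)=d_S(c_a,c)+d_S(c,b)$ and the triangle inequality: any $j$ with $d_S(c,\pi_j)\le r$ satisfies $j-i_a\ge 2r+1\ge 7$. The resulting $P$ from $c_a$ to $b$ has length at most $l_a+(N-i_a)\le 10r+2$ and contains $\pi_{i_a+5},\ldots,\pi_N$ as sub-multipath; tighten as before. The main obstacle I anticipate is the cross-filling verification between the two modified ends in part~(1), together with the careful bookkeeping required to ensure that tightening preserves the curves of $\pi\cap N_r(c)$ intact; the distance estimate $6r-8\ge 3$ uses $r\ge 3$ essentially.
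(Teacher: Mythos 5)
Your proposal is essentially the paper's own proof: order $\pi$ via Lemma \ref{order}, locate $\pi\cap N_r(c)$ via Lemma \ref{where}, apply Proposition \ref{con} once at each end, splice, verify filling by the straddling-pair distance estimate (the paper phrases it as $d_S(c_a,c_b)\leq 4r+10<8r+2$, you as $6r-8\geq 3$), and then tighten with Lemmas \ref{tight} and \ref{proc} (and its mirror) so that the middle multicurves $\pi_{i_a+6},\ldots,\pi_{i_b-6}$ survive. The only cosmetic difference is that the paper runs a geodesic from $c_a$ to a single curve $c'_a\in\pi_{i_a}$ rather than a multigeodesic ending at the whole multicurve $\pi_{i_a}$ (such a multigeodesic need not exist), but this changes nothing downstream since the preserved tail $\pi_{i_a+5},\ldots$ is the same either way.
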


\begin{proof}

For brevity we shall only write out the proof of (1), since the proof of (2) is analogous.

Use Lemma \ref{order} to order $\pi$ and define $i$ to be largest such that $d_S(\pi_i, c_a)\leq r$ and $k$ to be smallest such that $d_S(\pi_k,c_b)\leq r$. We have $6r+2 \leq k-i\leq 10r+2$.

By Lemma \ref{where}, we have $\pi\cap N_r(c)\subset \pi_{i+6}\cup ...\cup \pi_{k-6}$. Thus it suffices for the tight filling multipath $P$ to contain these multicurves.

Let $c'_a\in \pi_i$ and $c'_b\in\pi_k$ be curves. Let $g_a$ ($g_b$) be a geodesic connecting $c_a$ to $c'_a$ ($c'_b$ to $c_b$). Let $m$ be the multigeodesic such that $m_i=c'_a$, $m_k=c'_b$, and whenever $i<j<k$, $m_j=\pi_j$. Using Proposition \ref{con} with the concatenation of $g_a$ and $m$ we obtain a filling multipath $P_1$ from $c_a$ to $m_k$ with submultipath $m_{i+5},...,m_k$ and whose length from $c_a$ to $m_{i+5}$ is at most $r+5$. Similarly, using Proposition \ref{con} with $m$ and $g_b$, we obtain $P_2$ a filling multipath from $m_i$ to $c_b$ with submultipath $m_i,...,m_{k-5}$ and whose length from $m_{k-5}$ to $c_b$ is at most $r+5$.

Now we construct $P_3$ a multipath from $c_a$ to $c_b$ of length at most $12r+2$, by concatenating $P_1$ from $c_a$ to $m_{k-5}$, and $P_2$ from $m_{k-5}$ to $c_b$. This multipath has submultipath $m'=m_{i+5},...,m_{k-5}$. Now, $P_3$ is filling: suppose for contradiction that $(P_3)_p,(P_3)_q$ fail to fill. Then since $P_3$ is contained in the union of $P_1$ and $P_2$, we must have $(P_3)_p \notin P_2$ and $(P_3)_q\notin P_1$, and therefore since $m'\subset P_1\cap P_2$, $(P_3)_p, (P_3)_q\notin m'$. Therefore, $d_S((P_3)_p, c_a)\leq r+4$ and $d_S((P_3)_q,c_b)\leq r+4$, but failing to fill implies $d_S((P_3)_p,(P_3)_q)\leq 2$ and hence $d_S(c_a,c_b)\leq 4r+10 < 8r < 8r+2$ a contradiction.

Now $P_3$ has submultipath $m_{i+5},...,m_{k-5}$, and this is $\pi_{i+5},...,\pi_{k-5}$, which is tight everywhere at $\pi_{i+6},...,\pi_{k-6}$. Now we use Lemmas \ref{tight} and \ref{proc} to tighten $P_3$ at every vertex to gain a tight filling multipath $P_4$ of length at most $12r+2$ from $c_a$ to $c_b$. Furthermore, $P_4$ has submultipath $\pi_{i+6},...,\pi_{k-6}$ by Lemma \ref{proc}, and hence $\pi\cap N_r(c)\subset P_4$. We set $P=P_4$ and the theorem is proved. \end{proof}

\section{Proofs and bounds}

\subsection{Upper bounds on slices}

\label{sec:slice}

Recall the terminology from the introduction.

\begin{lem}
\label{fellow}

Given $r\geq 0$, $a,b\in\mathcal{C}^0(S)$, if $c\in\pi\in\mathcal{L}(a,b)$ such that $d_S(c,\{ a,b \})\geq r+2\delta+1$ then any geodesic connecting $N_r(a)$ to $N_r(b)$ must intersect $N_{2\delta}(c)$.

\end{lem}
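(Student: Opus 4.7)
The plan is a standard fellow-travelling argument for $\delta$-hyperbolic spaces, using the fact that geodesic quadrilaterals are $2\delta$-slim. First I would lift the multigeodesic $\pi$ to an honest geodesic in $\mathcal{C}^1(S)$: since $\pi=(m_0,\ldots,m_L)$ is a multigeodesic with $m_0=a$, $m_L=b$ and $c\in m_i$ for some $i$, picking any curve $c_j\in m_j$ (with $c_i=c$) gives a geodesic $\gamma:a=c_0,c_1,\ldots,c_L=b$ passing through $c$, and the hypothesis translates to $d_S(c,a)=i\geq r+2\delta+1$ and $d_S(c,b)=L-i\geq r+2\delta+1$.

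Next, given any geodesic $\sigma$ from $a'\in N_r(a)$ to $b'\in N_r(b)$, I would form the geodesic quadrilateral $Q$ with sides $\gamma$ (from $a$ to $b$), a geodesic $\eta_b$ from $b$ to $b'$ of length at most $r$, $\sigma$ (from $b'$ to $a'$), and a geodesic $\eta_a$ from $a'$ to $a$ of length at most $r$. The key geometric fact is that in a $\delta$-hyperbolic space, any geodesic quadrilateral is $2\delta$-slim: this is immediate by subdividing $Q$ into two triangles along a diagonal and applying $\delta$-slimness twice. Hence the vertex $c\in\gamma$ lies in $N_{2\delta}(\eta_a\cup\sigma\cup\eta_b)$.

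Finally I would rule out the two unwanted cases. If $c\in N_{2\delta}(\eta_a)$, then there is $p\in\eta_a$ with $d_S(c,p)\leq 2\delta$, so
\[d_S(c,a)\leq d_S(c,p)+d_S(p,a)\leq 2\delta+r,\]
contradicting $d_S(c,a)\geq r+2\delta+1$. The symmetric computation rules out $c\in N_{2\delta}(\eta_b)$ using $d_S(c,b)\geq r+2\delta+1$. The only remaining possibility is $c\in N_{2\delta}(\sigma)$, i.e.\ $\sigma\cap N_{2\delta}(c)\neq\emptyset$, as required.

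There is no real obstacle here; the one thing I would be a little careful about is the passage from the multigeodesic $\pi$ to an actual geodesic through $c$, and verifying that the distance hypothesis $d_S(c,\{a,b\})\geq r+2\delta+1$ really transfers (it does, since distances between multicurves in a multigeodesic are realized by their vertices). Everything else is a routine application of $2\delta$-slim quadrilaterals together with the triangle inequality.
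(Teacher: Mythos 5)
Your proof is correct and takes essentially the same approach as the paper: the paper's argument subdivides the quadrilateral $a,b,b',a'$ into two triangles along the diagonal from $a$ to $b'$ and excludes the short sides $\pi_a,\pi_b$ step by step using $d_S(c,\{a,b\})\geq r+2\delta+1$, which is exactly the content of your $2\delta$-slim quadrilateral argument, with only the bookkeeping (intermediate point $c'$ versus a single exclusion at the end) differing.
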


\begin{proof}

Suppose $\pi'$ is a geodesic that connects $a',b'$ with $d_S(a,a'),d_S(b,b')\leq r$. It suffiices to show that $\pi'\cap N_{2\delta}(c)\neq \emptyset$.

There exists a geodesic $\pi_a$ connecting $a$ to $a'$, a geodesic $\pi_b$ connecting $b$ to $b'$ and a geodesic $\Pi$ connecting $a$ to $b'$.

By the slim triangles condition, $c$ is contained in the $\delta$-neighbourhood of $\pi_b\cup\Pi$, using the triangle $a,b,b'$. But $r+2\delta+1\leq d_S(b,c)\leq d_S(b,\pi_b)+d_S(\pi_b,c)\leq r+d_S(\pi_b,c)$ so $2\delta+1\leq d_S(\pi_b,c)$, so there exists $c'\in \Pi$ such that $d_S(c,c')\leq \delta$ and $d_S(\{ a,b \},c')\geq r+\delta+1$. A similar argument again shows that there exists $c''\in \pi'$ such that $d_S(c',c'')\leq \delta$ and $d_S(c'',\{ a,b \})\geq r+1$. In particular, $c''\in \pi'\cap N_{2\delta}(c)$.
\end{proof}

The following theorem is a statement of \cite[Theorems 1.1 and 1.2]{Bow08} with effective bounds given. We refer to the $| G(a,b) \cap N_{\delta}(c)|$ and $|G(a,b;r)\cap N_{2\delta}(c)|$ as the `slices'.

\begin{thm}
\label{slices}

Fix $\delta\geq 3$ such that $\mathcal{C}^1(S)$ is $\delta$-hyperbolic for all surfaces $S$ with $\xi(S)\geq 2$. Then the following statements hold, where $K$ is a uniform constant.

\begin{enumerate}

\item For any $a,b\in\mathcal{C}^0(S)$, for any curve $c\in\pi\in\mathcal{L}(a,b)$ we have $| G(a,b) \cap N_{\delta}(c)| \leq K^{\xi(S)}$.

\item For any $r\geq 0$, $a,b\in\mathcal{C}^0(S)$ such that $d_S(a,b)\geq 2r+2k+1$ (where $k=10\delta +1$), then for any curve $c\in\pi\in\mathcal{L}(a,b)$ such that $c\notin N_{r+k}(a)\cup N_{r+k}(b)$ we have $|G(a,b;r)\cap N_{2\delta}(c)| \leq K^{\xi(S)}$.

\end{enumerate}

\end{thm}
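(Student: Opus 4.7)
The plan is to reduce both parts of the theorem to Theorem \ref{thm:tight} by fixing two ``anchor'' curves $c_a,c_b$ on the reference geodesic $\pi$ that straddle $c$, and using hyperbolicity to force every tight multigeodesic contributing to the slice to fellow-travel $\pi$ near $c_a,c,c_b$. Theorem \ref{dir} will then convert each competing tight multigeodesic into a tight filling multipath of uniformly bounded length from the fixed endpoints $c_a$ to $c_b$, and Theorem \ref{thm:tight} will provide the $K^{\xi(S)}$ count.

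For part (1) I would set $r'=\delta$. In the principal case $d_S(c,a),d_S(c,b)\geq 4\delta+1$ I choose $c_a,c_b\in\pi$ with $d_S(c_a,c)=d_S(c,c_b)=4\delta+1$. For any $\pi'\in\mathcal{L}_T(a,b)$ containing a curve in $N_\delta(c)$, slim triangles applied to the two geodesics $\pi,\pi'$ with common endpoints $a,b$ give $\pi\subset N_\delta(\pi')$, so $d_S(c_a,\pi'),d_S(c,\pi'),d_S(c_b,\pi')\leq\delta=r'$. Theorem \ref{dir}(1) applied to $\pi'$ with anchors $c_a,c,c_b$ then yields a tight filling multipath $P_{\pi'}$ from $c_a$ to $c_b$ of length at most $12\delta+2$ with $\pi'\cap N_\delta(c)\subset P_{\pi'}$, so every curve of $G(a,b)\cap N_\delta(c)$ lies on some such $P_{\pi'}$. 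Theorem \ref{thm:tight} bounds the number of these multipaths by $B(S,12\delta+2)\leq 2\xi(S)(12\delta+2)d_0^{(12\delta+2)^2}$; since $d_0=2^{24\xi(S)+108}$, this is $K^{\xi(S)}$ for a uniform $K$, and the further factor of $(12\delta+3)\xi(S)$ curves per multipath is absorbed into the exponent. When $c$ is within $4\delta$ of exactly one endpoint I would apply Theorem \ref{dir}(2) with that endpoint playing the role of ``$b$'' in that statement; when $c$ is within $4\delta$ of both endpoints, then $d_S(a,b)\leq 8\delta$ and $G(a,b)\subset C(a,b;8\delta)$ is bounded directly by Theorem \ref{thm:tight}.

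For part (2) I would set $r'=2\delta$ and choose $c_a,c_b\in\pi$ with $d_S(c_a,c)=d_S(c,c_b)=8\delta+1$. Since $k=10\delta+1$, the hypothesis $c\notin N_{r+k}(a)\cup N_{r+k}(b)$ gives $d_S(c,\{a,b\})\geq r+10\delta+2$, whence each of $c_a,c,c_b$ is at distance at least $r+2\delta+1$ from $\{a,b\}$. Lemma \ref{fellow} applied in turn to $c_a$, $c$ and $c_b$ (each viewed as a vertex on $\pi\in\mathcal{L}(a,b)$) then shows every $\pi'\in\mathcal{L}_T(N_r(a),N_r(b))$ meeting $N_{2\delta}(c)$ satisfies $d_S(c_a,\pi'),d_S(c,\pi'),d_S(c_b,\pi')\leq 2\delta=r'$. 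Theorem \ref{dir}(1) and Theorem \ref{thm:tight} now give, exactly as in part (1), the bound $|G(a,b;r)\cap N_{2\delta}(c)|\leq K^{\xi(S)}$ via tight filling multipaths from $c_a$ to $c_b$ of length at most $24\delta+2$.

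The main obstacle will be calibrating $r'$ so that the fellow-traveling conclusion (from slim triangles in part (1), from Lemma \ref{fellow} in part (2)) delivers all three of $c_a,c,c_b$ within $r'$ of $\pi'$, while keeping $4r'+1$ small enough that the anchors actually exist on $\pi$; this is exactly where the constant $k=10\delta+1$ in the hypothesis of part (2) is chosen. Once the anchors are in place, the remainder is a mechanical combination of Theorem \ref{dir} and Theorem \ref{thm:tight}.
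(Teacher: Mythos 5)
Your proposal is correct and is essentially the paper's own argument: the same anchors $c_a,c_b$ at distance $4\delta+1$ (resp.\ $8\delta+1$) from $c$ on $\pi$, slimness (resp.\ Lemma \ref{fellow}) to get $c_a,c,c_b$ within $\delta$ (resp.\ $2\delta$) of every contributing tight multigeodesic, Theorem \ref{dir} with $r=\delta$ (resp.\ $2\delta$) to produce tight filling multipaths of length at most $12\delta+2$ (resp.\ $24\delta+2$) from $c_a$ to $c_b$, and Theorem \ref{thm:tight} for the $K^{\xi(S)}$ count, with the same endpoint cases handled via Theorem \ref{dir}(2) or directly. One cosmetic point: $B(S,L)$ in Theorem \ref{thm:tight} already bounds the number of \emph{curves} in $C(c_a,c_b;L)$, so no additional ``curves per multipath'' factor is needed.
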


\begin{proof}[Proof of \textnormal{(1)}]

We remind the reader of the notation of Theorem \ref{thm:tight}: $C(v_1,v_2;L)$ is the set of curves that lie on some tight filling multipath between $v_1,v_2\in\mathcal{C}^0(S)$ of length at most $L$; $B(S,L)$ is a computable bound for the cardinality for the set $C(v_1,v_2;L)$, which only depends on $S$ and $L$.

Given $c\in\pi\in\mathcal{L}(a,b)$, first suppose that $d_S(c,\{ a, b \})\geq 4\delta +1$. Let $c_a,c_b\in \pi$ be curves such that $d_S(c_a,c)=d_S(c,c_b)=4\delta+1$, $d_S(c_a,c)+d_S(c,c_b)=d_S(c_a,c_b)$ and $d_S(a,c_a)<d_S(a,c_b)$. We shall show that $G(a,b)\cap N_{\delta}(c) \subset C(c_a,c_b;12 \delta+2)$ to deduce that $|G(a,b)\cap N_{\delta}(c)|\leq B(S,12\delta +2)$.

Given any $v\in G(a,b)\cap N_\delta (c)$ we have $v\in \pi_T$, for some $\pi_T\in \mathcal{L}_T(a,b)$. By $\delta$-hyperbolicity, $d_S(c',\pi_T)\leq \delta$ whenever $c'\in \{c_a,c,c_b\}$ so by Theorem \ref{dir}, with $r=\delta$, there exists a tight filling multipath $P$ from $c_a$ to $c_b$ such that $\textnormal{length}(P)\leq 12\delta+2$ and $v\in \pi_T\cap N_\delta(c) \subset P\cap N_\delta (c)$. Thus $v\in C(c_a,c_b;12\delta +2)$ and the theorem is proved.

If $d_S(c,b)< 4\delta +1$ and $d_S(a,c)\geq 4\delta +1$ then we make an analogous argument with $c_a,c$ and $b$ and use Theorem \ref{dir}. If $d_S(a,c),d_S(c,b)< 4\delta +1$ then since every tight multigeodesic between $a$ and $b$ in this case is a tight filling multipath of length at most $8\delta +2$. Therefore it has length at most $12\delta+2$ thus $|G(a,b)|\leq B(S,12\delta +2)$. By Theorem \ref{thm:unifhyp} and Theorem \ref{thm:tight} we are done. \end{proof}

\begin{proof}[Proof of \textnormal{(2)}]

Given $c\in\pi\in\mathcal{L}(a,b)$ with $d_S(c,\{ a,b\} )\geq r+10\delta +2$, fix curves $c_a,c_b\in\pi$ such that $d_S(c_a,c)=d_S(c,c_b)=8\delta +1$, $d_S(c_a,c)+d_S(c,c_b)=d_S(c_a,c_b)$ and $d_S(a,c_a)<d_S(a,c_b)$. Then $d_S(\{c_a, c_b \} , \{ a, b\} ) \geq r + 2\delta +1$.

Given any $v\in G(a,b;r)\cap N_{2\delta}(c)$, we have $v\in\pi_T$, for some $\pi_T\in\mathcal{L}_T(a,b;r)$. By Lemma \ref{fellow}, $d_S(c',\pi_T)\leq 2\delta$ whenever $c'\in\{ c_a, c, c_b\}$. So by Theorem \ref{dir}, with $r=2\delta$, there exists a tight filling multipath $P$ of length at most $24\delta+2$ from $c_a$ to $c_b$ with $v\in \pi_T\cap N_{2\delta}(c)\subset P\cap N_{2\delta}(c)$. We conclude that $G(a,b;r)\cap N_{2\delta}(c)\subset C(c_a,c_b;24\delta+2)$ and thus $|G(a,b;r)\cap N_{2\delta}(c)| \leq B(S,24\delta+2)$. By Theorem \ref{thm:unifhyp} and Theorem \ref{thm:tight} we are done.\end{proof}

As a corollary of \cite[Section 3]{Bow08} using our bounds from Theorem \ref{slices} we have

\begin{thm}
\label{denominator}
There exists a computable $m=m(\xi(S))$ such that whenever $\phi$ is pseudo-Anosov, then the mapping class $\phi^m$ preserves a geodesic in $\mathcal{C}^1(S)$. Furthermore, $m$ is bounded by $\exp(\exp({K'\xi(S)}))$, where $K'$ is a uniform constant.
\end{thm}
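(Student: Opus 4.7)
The plan is to follow Bowditch's route to the rationality of stable lengths in \cite[Section 3]{Bow08}, replacing the qualitative finiteness statements used there with the explicit slice bounds now available from Theorem \ref{slices}. Since $\phi$ is pseudo-Anosov, it acts as a loxodromic isometry of the hyperbolic curve graph, so the tight geodesics from $c$ to $\phi^n(c)$ fellow-travel an invariant quasi-axis. Bowditch's construction organizes the curves appearing on these tight geodesics into a $\phi$-equivariant sequence of ``slices'' along the quasi-axis, on which $\phi$ acts by shifting indices.

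First I would fix a reference curve $c$ far from the endpoints of a long tight geodesic between $c$ and $\phi^n(c)$, so that Theorem \ref{slices}(1) applies: the slice $G(c,\phi^n(c))\cap N_\delta(c)$ has cardinality at most $K^{\xi(S)}$. By equivariance, $\phi$ induces a bijection between the slice at $c$ and the slice at $\phi(c)$, and composing around a fundamental period of the axis yields a permutation of a set of cardinality at most $N:=K^{\xi(S)}$.

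Second I would take $m$ to be the exponent of the symmetric group $\mathrm{Sym}(N)$, which divides $N!$. Then $\phi^m$ acts trivially on every slice. A standard diagonal/selection argument (this is the step really due to Bowditch) upgrades the pointwise fixing of slices into an actual $\phi^m$-invariant bi-infinite tight geodesic: pick a curve in one slice, propagate it by $\phi^m$, and verify tightness is preserved because each intermediate vertex is forced, via Corollary \ref{3} and the tightening Lemmas \ref{tight} and \ref{proc}, to lie in a slice on which $\phi^m$ is the identity. The resulting bi-infinite sequence is a geodesic in $\mathcal{C}^1(S)$ invariant under $\phi^m$.

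Finally, the claimed bound follows from estimating $m \leq N! = (K^{\xi(S)})!$; using $\log(n!) \leq n \log n$ one gets $m \leq \exp(K^{\xi(S)} \cdot \xi(S) \log K) \leq \exp(\exp(K'\xi(S)))$ for a uniform $K'$ absorbing the constants. The principal obstacle in making this rigorous is the selection step: showing that there is a \emph{simultaneous} choice of vertex in each slice that produces a genuine tight geodesic, rather than just an abstract $\phi^m$-fixed point in each slice. This is precisely where Bowditch's combinatorial setup is used, and where our effective slice bound from Theorem \ref{slices} converts his existential argument into the explicit double-exponential estimate.
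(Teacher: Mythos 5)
Your overall plan---make Bowditch's Section 3 effective by feeding in the slice bound from Theorem \ref{slices}---is the same as the paper's, but the mechanism you propose in the middle does not work, and the step you defer is exactly the content you would need to supply. First, the assertion that ``$\phi^m$ acts trivially on every slice'' cannot hold: a pseudo-Anosov has no periodic curves, so no power of $\phi$ fixes any vertex of $\mathcal{C}^0(S)$, let alone a slice pointwise. The underlying problem is that $\phi$ does not permute a slice: it sends the slice at $c$ to the slice at $\phi(c)$, a different finite set, and there is no canonical identification of slices ``one fundamental period apart'' along the quasi-axis unless you already have a $\phi$-invariant, periodically indexed geodesic---which is precisely what you are trying to construct, so the permutation/exponent-of-$\mathrm{Sym}(N)$ device is circular. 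Bowditch's actual argument pigeonholes on the boundedly many possibilities for the local data carried between nearby slices of the invariant set, and what it yields is a recurrence exponent $m'$ that \emph{depends on} $\phi$ and is bounded by $P^2$, where $P$ is the slice bound; it does not produce a single exponent killing a permutation group.

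Second, you flag the ``selection step'' (assembling the fixed data into an actual invariant tight geodesic) as the principal obstacle and leave it to ``Bowditch's combinatorial setup''; but that step \emph{is} Bowditch's Lemma 3.4, i.e.\ essentially the whole content of the theorem apart from the arithmetic. The paper's proof simply cites \cite[Lemma 3.4]{Bow08} in effective form: for each pseudo-Anosov $\phi$ there exists $m'\leq P^2$ such that $\phi^{m'}$ preserves a geodesic, with $P\leq K^{\xi(S)}$ by Theorem \ref{slices}. The remaining issue, which your write-up does not address, is uniformity in $\phi$: since $m'$ depends on $\phi$, one takes $m=\mathrm{lcm}(1,\ldots,P^2)$, so that $m'\mid m$ and $\phi^{m}=(\phi^{m'})^{m/m'}$ preserves the same geodesic; then $m\leq P^{2\pi(P^2)}$ with $\pi(P^2)\leq 2P^2/\log(P^2)$, giving the bound $\exp(\exp(K'\xi(S)))$. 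Your final estimate via $N!$ would also be doubly exponential, so the arithmetic is not the problem; what is missing is a correct source of a $\phi$-independent exponent (the lcm trick, or an honest reproof of Lemma 3.4 with a uniform exponent) together with a proof, rather than a deferral, of the invariant-geodesic construction.
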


\begin{proof}

In \cite[Lemma 3.4]{Bow08} for any $\phi$, there exists $m'\leq P^2$ such that $\phi^{m'}$ preserves a geodesic in $\mathcal{C}^0(S)$, where $P$ represents the bounds on slices as in Theorem \ref{slices}. Thus, we may take $m=\textnormal{lcm}(1,...,P^2)$ so that $\phi^m$ also preserves a geodesic, since $m'$ divides $m$. We can bound $m$ by $P^{2\pi(P^2)}$, where $\pi$ is the prime-counting function. We have $\pi(P^2)\leq \frac{2P^2}{\textnormal{log}(P^2)}$, see \cite{RossScho}. Now $P$ is bounded by $K^{\xi(S)}$ by Theorem \ref{slices} and we are done.\end{proof}

\subsection{Lower bounds on slices} \label{lowerbound}

We now give examples to show that any upper bounds in Theorem \ref{slices} must diverge exponentially with respect to $\xi(S)$, so in a sense the divergent behaviour of our upper bounds is best possible. We don't expect our examples here to have the largest possible slices.

Note that for $g\geq 1$, our examples given below are pairs of non-separating curves, and therefore these pairs of curves lie on bi-infinite geodesics in the curve graph. To see this, one can use high powers of pseudo-Anosovs on the complements of the curves and use the bounded geodesic image theorem of Masur--Minsky \cite{MasurMinsky00}, \cite{Webb13} to extend the geodesic to a longer one. For full details of this argument see \cite{BirmanMenasco12}. The point here is that high cardinality of slices occurs in long geodesics and not just short ones.

First, as an illustrative example, we give a pair of filling curves on $S_{5,10}$. See Figure \ref{fig:example}, the filling pair of curves is illustrated in black. We write $c_0$ to denote the horizontal black curve and $c_3$ to denote the other black curve. As illustrated, $c_3$ is a union of 5 arcs. The leftmost 2 arcs form an \textit{S shape}. The rightmost arc forms a \textit{snake}. This example can be generalized to $S_{2g+1,p}$ using $g$ S shapes and one snake. Similar examples can be made for the rest of surfaces with $\xi(S)\geq 2$.

We shall count possibilities for $c_2$ such that $c_0,c_1,c_2,c_3$ is a tight multigeodesic in order to establish a bound on slices from below.

In Figure \ref{fig:example} there is one light grey curve illustrated. It occurs as the boundary of the subsurface filled by $c_3$ and the dark grey curves, furthermore the dark grey curves occur as the boundary of the subsurface filled by the horizontal black curve and the light grey curve. Thus, the light grey curve serves as an example of $c_2$ and the dark grey curves serve as an example of $c_1$ such that $c_0,c_1,c_2,c_3$ is a tight multigeodesic.

Now we argue that there are more possibilities for $c_2$. The light grey curve illustrated envelops the left most S shape, and envelops the snake and punctures on the right, apart from two evenly spaced punctures that lie above $c_0$. There are $5$ punctures above $c_0$, we care about the middle and the rightmost punctures. The light grey curve illustrated represents a particular choice, one out of $2^{2+2}$---which out of the $2$ S shapes to envelop, and which of the $2$ punctures (middle or rightmost above $c_0$) to omit when enveloping the snake on the right. Out of these choices, one curve bounds a disc (as it envelops everything) and $2$ are peripheral (as it envelops everything except one puncture). Thus there are $2^{2+2}-1-2$ choices we could have made for the light grey curve. It is not difficult to see that all these choices lie on some tight geodesic between $c_0$ and $c_3$.

This example can be easily generalized to $S_{g,p}$, with $g=2g'+1$ and $p=4p'+2$, and the slice would have at least $2^{g'+p'}-1-p'$ curves. These examples can be generalized further to all surfaces with $\xi(S)\geq 2$. This demonstrates that any upper bound in Theorem \ref{slices} must be at least exponential with respect to the complexity of the surface.

\begin{figure}
\executeiffilenewer{example.svg}{example.pdf}%
{inkscape -z -D --file=example.svg %
--export-pdf=example.pdf --export-latex}%
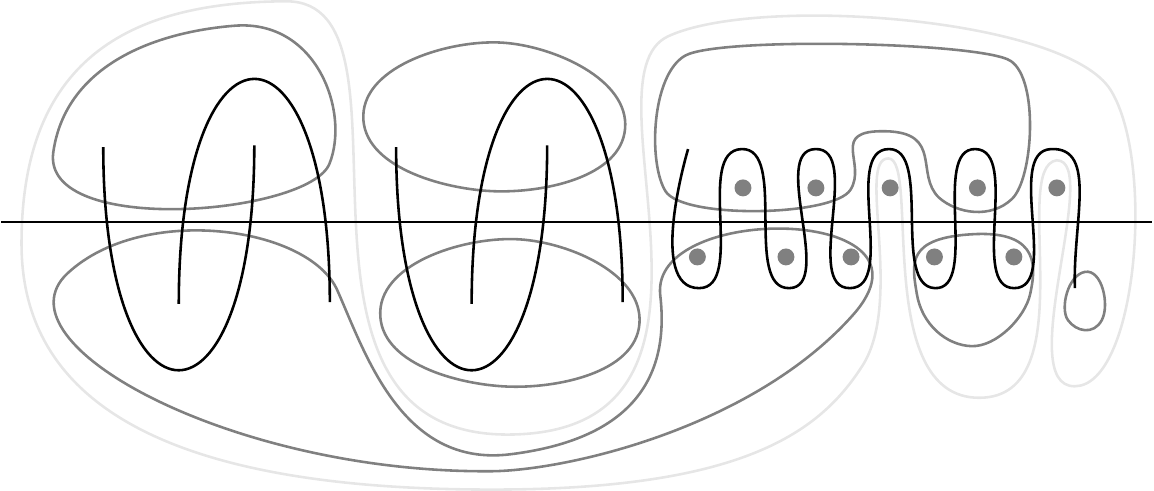%

\caption{Numbers represent handles and there is one point at infinity.}
\label{fig:example}
\end{figure}

\subsection{Acylindrical action constants}

The following definition is a remedy to having infinite vertex stabilizers in the case of the mapping class group acting on the curve graph. The definition says the `$r$-coarse stabilizers' of a sufficiently far away pair of points has uniform cardinality. 

\begin{defn} \label{def:acy}
A group $G$ acts on a path-metric space $(X,d_X)$ \textit{acylindrically} if for all $r\geq 0$, there exists $R,N$ such that whenever elements $a,b\in X$ satisfy $d_X(a,b)\geq R$ then there are at most $N$ elements $g\in G$ such that $d_X(a,ga),d_X(b,gb)\leq r$.
\end{defn}

We now give computable constants for the acylindrical action of the mapping class group on the curve graph. The main point is to say something about the behaviour of $R,N$ in terms of $r$ and $\xi(S)$. This may be of interest. The theorem was originally proved by Bowditch \cite{Bow08}, but without computable constants.

\begin{thm}\label{thm:acy}

The group $\mathcal{MCG}(S)$ acts on the metric space $\mathcal{C}^0(S)$ acylindrically. Fix $\delta,K$ as in Theorem \ref{slices}. We may take $R=4r+24\delta+7$ and $N=N_0(2r+4\delta+1)(8\delta+7)K^{2\xi(S)}$. In particular, fixing $r$, then $R$ is constant and $N$ grows exponentially with respect to $\xi(S)$. Fixing $\xi(S)$, then $R$ and $N$ grow linearly with respect to $r$.

\end{thm}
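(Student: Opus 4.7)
The plan is to combine the effective slice bounds of Theorem \ref{slices}(2) with the finite-stabilizer statement of Lemma \ref{fin}, exploiting a filling pair of curves placed in the interior of a tight multigeodesic joining $a$ to $b$. Given $a,b\in\mathcal{C}^0(S)$ with $d_S(a,b)\geq R=4r+24\delta+7$, and $g\in\mathcal{MCG}(S)$ with $d_S(a,ga),d_S(b,gb)\leq r$, I fix any tight multigeodesic $\pi=(\pi_0,\dots,\pi_L)$ from $a$ to $b$ and set $D=2r+12\delta+2$. I then choose curves $c\in\pi_D$ and $c^\ast\in\pi_{D+3}$. Since they lie on a multigeodesic, $d_S(c,c^\ast)=3$, so $\{c,c^\ast\}$ fills $S$, and Lemma \ref{fin} bounds $|\textnormal{stab}(c)\cap\textnormal{stab}(c^\ast)|\leq N_0$. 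Consequently $g$ is determined by the pair $(gc,gc^\ast)$ up to a factor of $N_0$, so it suffices to bound the possibilities for this pair.

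The main step is bounding $|\{gc\}|$. The image $g\pi$ is a tight multigeodesic joining $ga\in N_r(a)$ to $gb\in N_r(b)$, so in particular $gc\in G(a,b;r)$. Applying $\delta$-slim triangles twice to the quadrilateral with vertices $a,b,gb,ga$ shows every curve of $g\pi$ lies within $2\delta$ of $\pi\cup[a,ga]\cup[b,gb]$. The constraint $D>2r+2\delta$, together with $d_S(ga,gc)=D$, rules out $gc$ being within $2\delta$ of the short sides $[a,ga]$ or $[b,gb]$, forcing the existence of $c''\in\pi$ with $d_S(gc,c'')\leq 2\delta$. A triangle-inequality computation pins the position $i'$ of $c''$ on $\pi$ to $|i'-D|\leq r+2\delta$, so $c''$ is one of at most $2r+4\delta+1$ curves. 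The lower bound $D\geq 2r+2\delta+k+1$ with $k=10\delta+1$ guarantees the endpoint-avoidance hypothesis of Theorem \ref{slices}(2) is satisfied at $c''$, giving $|G(a,b;r)\cap N_{2\delta}(c'')|\leq K^{\xi(S)}$, whence $|\{gc\}|\leq(2r+4\delta+1)K^{\xi(S)}$.

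For $|\{gc^\ast\}|$ given $gc$, the same slim-quadrilateral argument locates $gc^\ast$ within $2\delta$ of some $c^{\ast\ast}\in\pi$. However, the extra rigidity $d_S(gc,gc^\ast)=3$ yields $d_S(c'',c^{\ast\ast})\leq 2\delta+3+2\delta=4\delta+3$, so $c^{\ast\ast}$ is now confined to at most $8\delta+7$ positions on $\pi$; Theorem \ref{slices}(2) then gives $|\{gc^\ast\}|\leq (8\delta+7)K^{\xi(S)}$. Multiplying these bounds by the $N_0$ stabilizer factor yields $N=N_0(2r+4\delta+1)(8\delta+7)K^{2\xi(S)}$. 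The main obstacle is the bookkeeping that fixes $R$: the three constraints $D>2r+2\delta$ for fellow-travelling, $D-r-2\delta>r+k$ so Theorem \ref{slices}(2) applies at $c''$, and the symmetric requirement $d_S(a,b)-(D+3)\geq 2r+12\delta+2$ for $c^\ast$, combine to force $D\geq 2r+12\delta+2$ and $d_S(a,b)\geq D+3+(2r+12\delta+2)=4r+24\delta+7$, matching the claimed $R$ exactly.
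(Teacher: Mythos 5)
Your argument is correct and is essentially the paper's own proof: the same choice of a distance-3 (hence filling) pair deep inside a tight multigeodesic at depth $2r+12\delta+2$, the same fellow-travelling/projection step locating $gc$ and $gc^\ast$ within $2\delta$ of $\pi$, the same counts $(2r+4\delta+1)$ and $(8\delta+7)$ of candidate centres feeding into Theorem \ref{slices}(2), and the same final factor $N_0$ from Lemma \ref{fin}, yielding exactly $R=4r+24\delta+7$ and $N=N_0(2r+4\delta+1)(8\delta+7)K^{2\xi(S)}$. The only cosmetic difference is that you run the slim-quadrilateral argument directly where the paper phrases it via nearest-point projections (Lemma \ref{fellow}), which is the same estimate.
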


\begin{proof}

Suppose $d_S(a,b)\geq R$, then we may pick $x,y\in \pi \in \mathcal{L}_T(a,b)$ such that $d_S(x,y)=3$, and, $d_S(\{ x, y \} ,\{ a,b \} ) \geq r + (10\delta+1) + (2\delta+r) +1$.

Suppose $d_S(a,ga),d_S(b,gb)\leq r$. Let $x'$ and $y'$ be the nearest point projections of $gx$ and $gy$ onto $\pi$, respectively. We see that $d_S(x',\pi),d_S(y',\pi)\leq 2\delta$. Also, $d_S(x,x'),d_S(y,y')\leq r+2\delta$ and furthermore $d_S(x',y')\leq 4\delta+3$. Therefore $d_S(\{ x' , y' \}, \{ a, b \})\geq r + 10\delta + 2$ so by Theorem \ref{slices} there are at most $(2r+4\delta+1)K^{\xi(S)}$ possibilities for $gx$ and at most $(8\delta+7)K^{\xi(S)}$ possibilities for $gy$, given $gx$. By Lemma \ref{fin}, there are at most $N=N_0(2r+4\delta+1)(8\delta+7)K^{2\xi(S)}$ possibilities for $g$. By Theorem \ref{thm:unifhyp} we obtain the last statement.\end{proof}

In fact one can get $R=2r+13$ by using Proposition \ref{con}, Lemmas \ref{tight} and \ref{proc}, and some coarse geometric arguments. It is not known if we can take $R=2r+3$.

\subsection{Computing stable lengths of pseudo-Anosovs and invariant tight geodesic axes}

Now we outline a procedure to compute the stable length on the curve graph of any given pseudo-Anosov, answering a question of Bowditch asked at the `Aspects of hyperbolicity in geometry, topology, and dynamics' workshop in July 2011.

We won't make an effort here to optimize running times. We hope to return to this in a later paper along with a distance algorithm implied by Theorem \ref{thm:tight}.

\begin{thm} \label{stablecalc}
There exists a finite time algorithm that takes as input a surface $S$, a pseudo-Anosov $\phi$ on $S$ and returns the stable length of $\phi$.
\end{thm}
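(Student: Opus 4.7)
The plan is to reduce computing the stable length to computing an integer translation length on an invariant geodesic, then to invoke the distance algorithm implicit in the proof of Theorem~\ref{thm:tight} (noted in the remark following it).

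First, apply Theorem~\ref{denominator} to compute $m = m(\xi(S))$ such that $\phi^m$ preserves a geodesic $\gamma$ in $\mathcal{C}^1(S)$. Let $\tau$ be the (positive) integer translation length of $\phi^m$ along $\gamma$; the stable length of $\phi$ is then $\tau/m$, so it suffices to compute $\tau$.

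Fix any curve $c \in \mathcal{C}^0(S)$ and compute the sequence $D_N := d_S(c, \phi^{mN}(c))$ for $N = 1,2,3,\ldots$ using the finite-time distance algorithm from Theorem~\ref{thm:tight}. Writing $\rho := d_S(c, \gamma)$, projecting $c$ onto $\gamma$ and using $\phi^{mN}$-equivariance gives $|D_N - N\tau| \leq 2\rho$ by the triangle inequality, so $D_N/N \to \tau$. Since $\tau$ is a positive integer bounded above by $D_1$, we have $\tau = \mathrm{round}(D_N/N)$ once $N > 4\rho$; dividing by $m$ then yields the stable length.

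For termination I would exploit the dichotomy that among the finitely many candidates $\tau_0 \in \{1,\ldots,D_1\}$ the sequence $D_N - N\tau_0$ is bounded exactly when $\tau_0 = \tau$, and grows linearly as $N(\tau - \tau_0) + O(1)$ otherwise. Hence as $N$ increases, incorrect candidates are eliminated in finite time by observing unbounded growth of $|D_N - N\tau_0|$, and the algorithm outputs the sole surviving value divided by $m$.

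The main obstacle is producing an explicit a priori stopping bound, since $\rho$ is not directly controlled by the input. One workaround is to bound $\rho$ in terms of $D_1$ and $\delta$ via coarse-geometric estimates for loxodromic isometries (the displacement $d_S(c, \phi^m(c))$ controls the distance from $c$ to the axis up to lower-order $\delta$-terms). Alternatively, one can replace $c$ by a more central choice, such as a vertex near the middle of a tight multigeodesic from $c$ to $\phi^{mN}(c)$ for sufficiently large $N$, which by the thin-triangles condition lies in a uniformly bounded neighbourhood of $\gamma$. Either refinement converts the scheme above into one with an explicit termination bound depending only on $m$, $\delta$, and the computed quantity $D_1$.
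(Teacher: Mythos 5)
Your argument is correct in outline but takes a genuinely different route from the paper for the computational step. The paper's proof is two lines: after Theorem~\ref{denominator} produces $m$ with $\phi^m$ preserving a geodesic, it simply invokes Shackleton's result \cite[Proposition 7.1]{Shack12} to extract the stable length. You instead rebuild that step by hand: compute $D_N=d_S(c,\phi^{mN}(c))$ with the distance algorithm implicit in Theorem~\ref{thm:tight} and round $D_N/N$ once $N$ exceeds an explicit multiple of $\rho=d_S(c,\gamma)$. That is a legitimate alternative which stays inside the paper's own toolkit, but two caveats are needed. First, the ``dichotomy'' in your third paragraph is not by itself a stopping rule: at any finite stage you cannot certify that $|D_N-N\tau_0|$ will keep growing, so termination really does rest entirely on your final paragraph. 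Second, the quantitative claim there is off as stated: when the translation length $\tau$ of $\phi^m$ along its axis is small compared with $\delta$ (the typical case, since $\tau$ may well equal $1$), the displacement does not control $\rho$ ``up to lower-order $\delta$-terms'', because the projections of $c$ and $\phi^m(c)$ to $\gamma$ are too close for the usual projection estimate. The standard fix: pick $N_0$ with $N_0\tau\geq 10\delta$ (e.g.\ $N_0=\lceil 10\delta\rceil$ works since $\tau\geq 1$), apply the projection estimate to $\phi^{mN_0}$ to get $D_{N_0}\geq 2\rho+N_0\tau-C\delta$, and combine with $D_{N_0}\leq N_0D_1$ to obtain a computable bound of the form $\rho\leq O(\delta D_1)$; since $\delta$ is the fixed uniform constant of Theorem~\ref{thm:unifhyp}, this yields the explicit stopping time you need. (Your second workaround, taking a vertex near the middle of a geodesic from $c$ to $\phi^{mN}(c)$, is circular on its own, since ``sufficiently large $N$'' again depends on $\rho$.) With that estimate spelled out your proof is complete, and arguably more self-contained than the paper's, at the cost of redoing what \cite{Shack12} already provides.
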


\begin{proof}
In Theorem \ref{denominator} we constructed a computable $m=m(\xi(S))$ such that $\phi^m$ preserves some geodesic axis. Then apply \cite[Proposition 7.1]{Shack12}.\end{proof}

Note that $\phi^{2m}$ preserves a geodesic with even translation length, thus by the tightening procedure a tight geodesic is preserved. The following proposition asserts that one can compute all invariant tight geodesics of any given pseudo-Anosov, returning an empty set if there are none.

\begin{thm} \label{tightcalc}
There exists a finite time algorithm that takes as input a surface $S$, a pseudo-Anosov $\phi$ on $S$ and returns all invariant tight geodesics of $\phi$. These are in the form of a collection of finite sets of curves each of whose orbit under $\phi$ is a tight geodesic.
\end{thm}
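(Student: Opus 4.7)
The plan is to reduce finding $\phi$-invariant tight geodesics to enumerating finitely many candidate ``fundamental segments'' of length $\ell(\phi)$ and testing each for $\phi$-equivariant extension to a bi-infinite tight geodesic.

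First I would compute $\ell := \ell(\phi)$ via Theorem \ref{stablecalc}. Since any $\phi$-invariant geodesic in $\mathcal{C}^1(S)$ has integer translation length equal to $\ell$, the algorithm returns the empty set whenever $\ell$ is not a positive integer. Otherwise, by Theorem \ref{denominator} there is a computable $m = m(\xi(S))$ with $\phi^{2m}$ preserving some geodesic; tightening it via Lemmas \ref{tight} and \ref{proc} yields a $\phi^{2m}$-invariant tight geodesic axis $\gamma_0$, constructible explicitly by fixing any curve $a$, taking $N$ large, and extracting the middle portion of any tight multigeodesic from $a$ to $\phi^{2mN}(a)$ (finitely many by Theorem \ref{thm:tight}, together with the distance algorithm noted after its proof).

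Next I would enumerate candidate starting curves for $\phi$-invariant axes. Every $\phi$-invariant tight geodesic is $\phi^{2m}$-invariant, and in the $\delta$-hyperbolic graph $\mathcal{C}^1(S)$ any two $\phi^{2m}$-invariant bi-infinite geodesics share the same ideal endpoints and so $2\delta$-fellow-travel. Thus every such axis meets $N_{2\delta}(\mathcal{F}_0)$, where $\mathcal{F}_0 = \{c_0, c_1, \dots, c_{2m\ell - 1}\}$ is one $\phi^{2m}$-period on $\gamma_0$. Applying Theorem \ref{slices}(2) to a long subsegment of $\gamma_0$ containing each $c_i$ (with $r$ chosen appropriately) shows that the number of curves in $N_{2\delta}(c_i)$ that can lie on such a nearby tight geodesic is at most $K^{\xi(S)}$, giving a finite, effectively computable set $\mathcal{F}$ of at most $2m\ell\cdot K^{\xi(S)}$ candidate starting curves.

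For each $c \in \mathcal{F}$ with $d_S(c, \phi(c)) = \ell$ (distance computed via the algorithm implicit in Theorem \ref{thm:tight}), I would enumerate every tight multigeodesic $(c = c'_0, c'_1, \dots, c'_\ell = \phi(c))$, finitely many by Theorem \ref{thm:tight}. Extending $\phi$-equivariantly by $c'_{k\ell + i} := \phi^k(c'_i)$ for $0 \leq i < \ell$, the algorithm checks the single additional tightness condition $c'_0 = \partial F(\phi^{-1}(c'_{\ell - 1}), c'_1)$; all other tightness conditions are inherited from the segment or from $\phi$-equivariance of the boundary-filling operation. Geodesicity of the bi-infinite extension is automatic: the triangle inequality gives $d_S(c'_0, \phi^n(c'_0)) \leq n\ell$, and combined with $d_S(c'_0, \phi^n(c'_0))/n \to \ell$ this forces $d_S(c'_0, c'_{n\ell}) = n\ell$, whence $d_S(c'_i, c'_j) = |i - j|$ for all $i, j$. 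The passing candidates are output, grouped so that each axis appears exactly once. The principal obstacle is the finite enumeration of starting curves: without combining $\delta$-hyperbolicity (to confine all $\phi^{2m}$-invariant axes to a $2\delta$-neighborhood of $\gamma_0$) with the slice bounds of Theorem \ref{slices} (to bound the tight-geodesic curves near each $c_i$), the search set would be a priori infinite. A secondary technical point, resolved by $\phi$-equivariance of $\partial F$, is reducing tightness of the bi-infinite extension to one fundamental domain plus a single seam check at $c'_0$.
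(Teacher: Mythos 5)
There is a genuine gap at the crucial ``finite and effectively computable'' step. Your candidate set $\mathcal{F}$ is obtained by citing Theorem~\ref{slices}(2), but that theorem is only a cardinality bound on $G(a,b;r)\cap N_{2\delta}(c_i)$; it is not an enumeration procedure. The set $G(a,b;r)$ is defined via tight geodesics whose endpoints range over the balls $N_r(a),N_r(b)$, which are infinite (the curve graph is not locally finite), and the ball $N_{2\delta}(c_i)$ is likewise infinite, so knowing $|G(a,b;r)\cap N_{2\delta}(c_i)|\leq K^{\xi(S)}$ gives you no way to list its elements or to certify that a search has found them all. This is exactly the point the paper's proof has to work around: it never enumerates slices, but instead fixes an auxiliary geodesic $\pi$ from $c$ to $\psi(c)$ with $\psi=\phi^{2m(16\delta+15)}$, and for specific curves $a,b\in\pi$ computes the sets $C'(a,b)=C(a,b;d_S(a,b)+4\delta)$, which \emph{are} effectively enumerable because the proof of Theorem~\ref{thm:tight} is an explicit recursion; Proposition~\ref{con} together with Lemmas~\ref{tight} and~\ref{proc} then guarantees that a full fundamental domain of every invariant tight geodesic of $\phi^{2m}$ appears as a sub-multipath of something in some $C'(a,b)$. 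Without that bridge from finiteness to computability, your algorithm cannot get off the ground. Relatedly, your explicit construction of $\gamma_0$ is not justified: the middle portion of a tight multigeodesic from $a$ to $\phi^{2mN}(a)$ fellow-travels the invariant axes but need not lie on any $\phi^{2m}$-invariant tight geodesic, so you do not actually obtain an invariant tight axis this way (only a reference segment, which is all the paper uses).

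A second, smaller but real, completeness problem is your anchoring of the fundamental segments at single curves: you enumerate tight multigeodesics from $c$ to $\phi(c)$ with $c\in\mathcal{F}$, but an invariant tight multigeodesic $(\dots,m_0,m_1,\dots)$ generally has multicurve vertices, and replacing the end multicurve $m_0$ by one of its curves $c$ typically destroys tightness at the adjacent index (tightness there asserts $m_1=\partial F(m_0,m_2)$, not $m_1=\partial F(c,m_2)$), so such axes would never show up in your enumeration. The paper avoids this by placing the window strictly in the interior of a tight filling multipath whose endpoints $a,b$ lie on the auxiliary geodesic, well away from the window, so the multicurve structure of the fundamental domain is preserved. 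Your seam check at $c'_0$ and your argument that geodesicity of the equivariant extension follows from $d_S(x,\phi^n x)\geq n\ell$ are fine, but the enumeration feeding them is neither computable as described nor exhaustive.
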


\begin{proof}

We start with calculating all invariant tight geodesics of $\phi^{2m}$ by considering $\psi=\phi^{2m(16\delta+15)}$, where $m$ is as in Theorem \ref{denominator}. Then we check whether or not under $\phi$ these are preserved.

Take any curve $c$ and calculate a geodesic $\pi$ from $c$ to $\psi(c)$. Calculate the sets $C'(a,b)=C(a,b;d_S(a,b)+4\delta)$ for $a,b\in \pi$, where we are using the notation of Theorem \ref{thm:tight}. Now we show that for all invariant tight geodesics $\pi_T$ of $\phi^{2m}$ there exists a sub-multipath of $\pi_T$ (whose orbit under $\phi^{2m}$ is $\pi_T$) that appears in some set $C'(a,b)$ with $a,b\in\pi$.

Let $\gamma\in \pi_T$ be a closest point projection of $c$ to $\pi_T$. By slimness with the rectangle $c,\psi(c),\psi(\gamma),\gamma$ the following statement holds: whenever $\alpha\in\pi_T$ is inbetween $\gamma$ and $\psi(\gamma)$ on $\pi_T$, and $d_S(\alpha,\{\gamma,\psi(\gamma)\})\geq 4\delta+1$, we have $d_S(\alpha,\pi)\leq 2\delta$. Therefore there exist $a,b\in\pi$ with $d_S(a,b)\geq ||\psi||-12\delta-2$, $d_S(a,\pi_T)\leq 2\delta$ and $d_S(b,\pi_T)\leq 2\delta$. By Proposition \ref{con} and Lemmas \ref{tight} and \ref{proc}, there exists a tight filling multipath from $a$ to $b$ of length at most $d_S(a,b)+4\delta$ with a sub-multipath of $\pi_T$ of length at least $d_S(a,b)-4\delta-12\geq ||\psi||-16\delta-14 = (16\delta+15)||\phi^{2m}||-16\delta-14\geq ||\phi^{2m}||$. Therefore $C'(a,b)$ contains the required sub-multipath of $\pi_T$.

The rest of the algorithm amounts to checking for each sub-multipath $P$ of $C'(a,b)$ if $P\cup \phi^{2m}(P)$ is a tight geodesic. Discard those that are not. Discard those $P$ that are not minimal in length. The length of any minimal one is equal to the stable length of $\phi^{2m}$. Now for each remaining $P$ check whether $P\cup \phi(P)$ is a subset of $P\cup \phi^{2m}(P)$. If it is then it follows that the orbit of $P$ under $\phi^{2m}$ is preserved under $\phi$. For those $P$ that remain, take any sub-multipath $P'$ of $P$ with $\frac{1}{2m}$ of the length. These are the required sets of curves. \end{proof}

\bibliography{tightreferences}
\bibliographystyle{amsalpha}

\end{document}